\documentclass[12pt]{amsart}
\usepackage{latexsym, amsthm, amscd, graphicx, euscript, float, subfig}
\textwidth144mm
\textheight200mm
\oddsidemargin7.5mm
\parskip4pt plus2pt minus2pt

\usepackage[english]{babel}
\usepackage{amssymb,amsthm,amsmath}

\newcommand{\PP}{ { \R^2 \setminus\{ 0 \} } }
\newcommand{\PS}{ { \Rn \setminus\{ 0 \} } }
\newcommand{\N}{\mathbb{N}}
\newcommand{\R}{\mathbb{R}}
\newcommand{\C}{\mathbb{C}}
\newcommand{\B}{B}
\newcommand{\D}{\mathbb{D}}
\renewcommand{\H}{\mathbb{H}}

\newcommand{\X}{\mathrm{X}}
\newcommand{\Y}{\mathrm{Y}}

\newcommand{\Hn}{\mathbb{H}^n}
\newcommand{\Bn}{\mathbb{B}^n}

\newcommand{\Rn}{ {\mathbb{R}^n} }
\newcommand{\Ball}[3]{B_{#1}(#2,#3)}

\newcommand{\comment}[1]{}

\newtheorem{theorem}[equation]{Theorem}

\newtheorem{proposition}[equation]{Proposition}

\theoremstyle{remark}
\newtheorem{definition}[equation]{Definition}

\newtheorem{example}[equation]{Example}

\newtheorem{conjecture}{Conjecture}[section]

\numberwithin{equation}{section}

\newcounter{minutes}\setcounter{minutes}{\time}
\divide\time by 60
\newcounter{hours}\setcounter{hours}{\time}
\multiply\time by 60 \addtocounter{minutes}{-\time}

\keywords{hyperbolic metric, quasihyperbolic metric, distance ratio metric, convexity properties, Banach spaces}
\subjclass{Primary  30C65; Secondary  46T05}

\begin{document}

\title[Quasihyperbolic geometry]
{\vspace*{.1cm} Quasihyperbolic Geometry in Euclidean and Banach Spaces}


\author[Riku Kl\'en]{\noindent Riku Kl\'en}
\email{riku.klen@utu.fi}
\address{\newline Department of Mathematics,
\newline FI-20014 University of Turku,
\newline Finland
}

\author[Antti Rasila]{\noindent Antti Rasila}
\email{antti.rasila@iki.fi}
\address{\newline Institute of Mathematics,
\newline Aalto University,
\newline P.O. Box 11100, FI-00076 Aalto,
\newline Finland
}

\author[Jarno Talponen]{\noindent Jarno Talponen}
\email{jarno.talponen@tkk.fi}
\address{\newline Institute of Mathematics,
\newline Aalto University,
\newline P.O. Box 11100, FI-00076 Aalto,
\newline Finland
}

\begin{abstract} 
We consider the quasihyperbolic metric, and its generalizations in both the $n$-dimensional Euclidean space $\Rn$, and in Banach spaces. Historical background, applications, and our recent work on convexity properties of these metrics are discussed.
\end{abstract}

\maketitle

\section{Introduction}

In this paper, we consider the quasihyperbolic metric, and its generalizations in both the $n$-dimensional Euclidean space $\Rn$, and in Banach spaces.

First we give some motivation and historical background. Quasihyperbolic metric can be viewed as a generalization of the hyperbolic metric. We start with an introduction to the hyperbolic metric and related results in function theory.

\subsection{Hyperbolic geometry in plane and in $n$-dimensional Euclidean spaces}

Suppose that $w=f(z)$ is a conformal mapping of the unit disk onto itself. Then by Pick's lemma we have the equality
\[
\bigg|\frac{dw}{dz}\bigg| = \frac{1-|w|^2}{1-|z|^2}.
\]
This identity can be written
\[
\frac{|dw|}{1-|w|^2} = \frac{|dz|}{1-|z|^2},
\]
which means that for any regular curve $\gamma$ in the unit disk $\D=\{z : |z|<1\}$, and for any conformal self mapping $f$ of $\D$, we have
\[
\int_{f\circ\gamma} \frac{|dw|}{1-|w|^2} = \int_{\gamma}  \frac{|dz|}{1-|z|^2}.
\]
We have obtained a length function which is invariant under conformal mappings of the unit disk onto itself. This allows us to give the following definition.

\begin{definition}
\label{diskhyp}
Let $z_1,z_2\in \D$. Then the \emph{hyperbolic distance} $\rho$ of $z_1,z_2$ is defined by
\[
\rho_{\D}(z_1,z_2)=\inf_\gamma \int_{\gamma}  \frac{2|dz|}{1-|z|^2},
\]
where the infimum is taken over all regular curves $\gamma$ connecting $z_1$ and $z_2$.
\end{definition}

Obviously, the multiplier $2$ is harmless, and it is sometimes omitted. By the conformal invariance we can define the hyperbolic distance on any simply connected domain $\Omega\subset\C$ by the formula
\[
\rho_\Omega(z_1,z_2) = \rho_{\D}\big(f(z_1),f(z_2)\big), \qquad z_1,z_2\in \Omega,
\]
where $f\colon \Omega\to \D$ is a conformal mapping. Existence of such mapping is guaranteed by Riemann's mapping theorem.

Note that Definition \ref{diskhyp} makes sense also in $\Rn$ for all $n\ge 2$, if we consider instead of the unit disk $\D$ the unit ball $\Bn=\{ x : |x|<1\}$. However, for $n\ge 3$ we have few conformal mappings, as by the generalized Liouville theorem (see e.g. \cite{vu2}), every conformal mapping of a domain $\Omega$ in the Euclidean space $\Rn$, $n\ge 3$, is a restriction of a M\"obius transformation. Thus, the domains of interest in higher dimensions are essentially the unit ball and the upper half-space $\Hn = \{ x : x_n>0\}$. Because formulas for  M\"obius transformations of the unit ball $\Bn$ onto $\Hn$ are well known, we arrive to the following definitions.

\begin{definition}
The hyperbolic distance between $x,y\in \Bn$ is defined by
$$
\rho_{\Bn}(x,y)=\inf_{\alpha\in\Gamma_{xy}}\int_{\alpha}\frac{2|dx|}{1-|x|^2},
$$
and between $x,y\in \Hn$ by
$$
\rho_{\Hn}(x,y)=\inf_{\alpha\in\Gamma_{ab}}\int_{\alpha}\frac{|dx|}{x_n},
$$
where $\Gamma_{xy}$ is the family of all rectifiable paths joining
$x$ and $y$ in $\Bn$ or $\Hn$, respectively.
\end{definition}

It is well-known that the hyperbolic distance $\rho_\Omega$ is a metric in the topological sense, i.e., the following properties hold for $d=\rho_\Omega$:
\begin{enumerate}
\item $d(x,y) \ge 0$,   
\item $d(x,y) = 0$, if and only if $x = y$,
\item $d(x,y) = d(y,x)$, and
\item $d(x,z) \le d(x,y) + d(y,z)$,
\end{enumerate}
for all $x,y,z\in \Omega$. The last condition is the triangle inequality. A curve $\gamma$ is called a \emph{geodesic} of the metric $d$ if triangle inequality holds as an equality for all points $x,y,z$ on $\gamma$ in that order. 

One may show that hyperbolic metric is always geodesic: for any $x,y\in \Omega$ there is a geodesic $\gamma$ connecting $x,y$, i.e., the infimum in the integral is attained by $\gamma$. For the ball or half-space those geodesics are always circular arcs orthogonal to the boundary. Further calculations allow us to obtain formulas for $\rho_{\Bn}$ and $\rho_{\Hn}$ in the closed form. The hyperbolic metrics in the upper half-plane $\H$ and in the unit disk $\D$ are also given by equations
\begin{equation}
\label{qr218}
\sinh^2\Big(\frac{1}{2}\rho_{\Bn}(x,y)\Big)
= \frac{|x-y|^2}{(1-|x|^2)(1-|y|^2)},
\,\,\,x,y\in\Bn,
\end{equation}
and
\begin{equation}
\cosh\rho_{\Hn}(x,y) = 1+\frac{|x-y|^2}{2\,x_n\,y_n},
\,\,\,x,y\in\Hn.
\end{equation}

Hyperbolic metric is topologically equivalent to the one defined by the restriction of the Euclidean norm: they define the same open sets. In fact, if $\Omega$ is the unit ball, or its image in a M\"obius transformation, the hyperbolic balls
\[
\{ y : \rho_{\Omega}(x,y)<r\},\qquad x\in \Omega,\; r>0,
\]
are Euclidean balls, although their hyperbolic center is not usually the same as the Euclidean center. 

There are certain results which make the hyperbolic metric particularly interesting from the point of view of the geometric function theory. For example, many classical results from the Euclidean geometry have hyperbolic analogues, such as the following hyperbolic form of Pythagoras' Theorem:

\begin{proposition}
\label{pythagoras}
\cite[Theorem 7.11.1]{Beardon}
For a hyperbolic triangle with angles $\alpha,\beta,\pi/2$ and
corresponding hyperbolic opposite side lengths $a,b,c$, we have
\[
\cosh c = \cosh a \cosh b.
\]
\end{proposition}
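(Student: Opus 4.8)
The plan is to place the triangle in a normalized position and compute directly from the closed-form distance formula. I would work in the unit ball model $\Bn$ (indeed $n=2$ suffices, since a triangle spans a two-dimensional hyperbolic subspace, but the computation is identical in every dimension), where equation \eqref{qr218} provides the explicit relation $\sinh^2\big(\tfrac12\rho_{\Bn}(x,y)\big)=|x-y|^2/\big((1-|x|^2)(1-|y|^2)\big)$. The first step is to move the right-angle vertex $C$ to the origin by a M\"obius transformation, which is a hyperbolic isometry and so preserves all three side lengths and all three angles.

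Next I would exploit two structural facts about the model at the origin. First, the hyperbolic geodesics through $0$ are exactly the Euclidean diameters, so each leg of the right angle, emanating from $C=0$, runs along a straight Euclidean segment. Second, the element $2|dx|/(1-|x|^2)$ is a conformal (pointwise scalar) multiple of the Euclidean metric, hence hyperbolic angles coincide with Euclidean angles; the right angle at $C$ therefore means the two legs lie along perpendicular diameters, which a rotation normalizes to the coordinate axes. Placing the remaining vertices at $(r,0,\dots)$ and $(0,s,0,\dots)$ and applying \eqref{qr218} to each leg, I would solve $\sinh^2(b/2)=r^2/(1-r^2)$ and the analogous equation to obtain $r=\tanh(b/2)$ and $s=\tanh(a/2)$.

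The final step is to evaluate \eqref{qr218} on the hypotenuse. With $|x-y|^2=\tanh^2(a/2)+\tanh^2(b/2)$ and denominators $1-\tanh^2 t=1/\cosh^2 t$, one gets $\sinh^2(c/2)=\sinh^2(a/2)\cosh^2(b/2)+\sinh^2(b/2)\cosh^2(a/2)$; substituting $\cosh^2 t=1+\sinh^2 t$ and using $\cosh t=1+2\sinh^2(t/2)$ collapses both sides of $\cosh c=\cosh a\cosh b$ to the same polynomial in $\sinh^2(a/2)$ and $\sinh^2(b/2)$.

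I expect the genuine content to be the reduction step rather than the algebra: one must be sure that the isometry group acts transitively enough to send any right triangle to this configuration, and that a right angle in the hyperbolic sense really does become a perpendicular pair of Euclidean diameters. Both follow from the conformality of the model and the transitivity of the M\"obius group on the ball, so once these are invoked the remaining trigonometric simplification is routine. Alternatively one could run the same argument in $\Hn$, placing $C$ at $i$ with legs along the imaginary axis and the unit circle $|z|=1$, which meet orthogonally there, and use $\cosh\rho_{\Hn}(x,y)=1+|x-y|^2/(2\,x_n\,y_n)$.
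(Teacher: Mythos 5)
Your argument is correct, and it is essentially the proof the paper points to: the paper itself gives no proof, only the citation to Beardon, and Beardon's Theorem 7.11.1 is proved by exactly this normalization — move the right-angle vertex to the origin of the disc model, use conformality to identify the legs with perpendicular Euclidean radii, and evaluate the closed-form distance formula on each side. Your algebra checks out ($\sinh^2(c/2)=\sinh^2(a/2)\cosh^2(b/2)+\sinh^2(b/2)\cosh^2(a/2)$ does expand to $\cosh c=\cosh a\cosh b$ via $\cosh t=1+2\sinh^2(t/2)$), and you correctly flag the two reduction facts (transitivity of the M\"obius group and equality of hyperbolic and Euclidean angles at the origin) as the only non-routine ingredients.
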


Here the hyperbolic triangle means the domain whose boundary consists of the hyperbolic geodesics connecting three points in the hyperbolic space. In particular, the hyperbolic distance is very useful in complex analysis. For example, we can prove the following version of the classical Schwarz's lemma (see for example \cite[p. 268]{Gamelin}). Interestingly, this result does also have a complete analogy for quasiconformal mappings in $\Rn$, $n\ge 2$ (see \cite[11.2]{vu2}).

\begin{proposition}
Let $f\colon \D\to\D$ be analytic. Then the following inequality holds
for the hyperbolic distance 
\begin{equation}
\label{pick}
\rho_{\D}\big(f(x),f(y)\big) \leq \rho_{\D}(x,y)\text{ for }x,y\in\D,
\end{equation}
where the equality holds if and only if $f$ is a M\"obius transformation.
\end{proposition}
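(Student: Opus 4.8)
The plan is to reduce the statement to the classical Schwarz lemma by pre- and post-composing $f$ with conformal automorphisms of $\D$, and then to exploit the conformal invariance of $\rho_\D$ that was established above via Pick's lemma. As a preliminary, I would record that along the radial geodesic one computes directly from Definition \ref{diskhyp} that
\[
\rho_\D(0,z)=\int_0^{|z|}\frac{2\,dt}{1-t^2}=\log\frac{1+|z|}{1-|z|},
\]
so that $\rho_\D(0,z)$ depends only on $|z|$ and is a strictly increasing function of it. This monotonicity is the device that will turn a modulus estimate into a distance estimate.

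Now fix $x,y\in\D$; the case $x=y$ is trivial, so assume $x\ne y$. Let $\varphi$ be a Möbius automorphism of $\D$ with $\varphi(0)=x$, and let $\psi$ be a Möbius automorphism of $\D$ with $\psi(f(x))=0$ (such automorphisms exist and are given by explicit formulas of the form $z\mapsto(z+a)/(1+\bar a z)$ composed with rotations). Set $g=\psi\circ f\circ\varphi$ and $w=\varphi^{-1}(y)$. Then $g\colon\D\to\D$ is analytic with $g(0)=\psi(f(x))=0$, so the classical Schwarz lemma yields $|g(w)|\le|w|$. Applying the conformal invariance of $\rho_\D$ to the automorphisms $\psi$ and $\varphi$ gives
\[
\rho_\D\big(f(x),f(y)\big)=\rho_\D\big(0,g(w)\big),\qquad \rho_\D(x,y)=\rho_\D(0,w),
\]
and combining $|g(w)|\le|w|$ with the monotonicity recorded above delivers the desired inequality $\rho_\D(f(x),f(y))\le\rho_\D(x,y)$.

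It remains to treat the equality case, which I regard as the main point requiring care. If $f$ is itself a Möbius automorphism of $\D$, then it is a hyperbolic isometry (again by conformal invariance), so equality holds for every pair $x,y$. Conversely, suppose equality holds for some pair $x\ne y$. Then $|g(w)|=|w|$ at the point $w=\varphi^{-1}(y)\ne 0$, and the rigidity clause of the Schwarz lemma forces $g$ to be a rotation $g(z)=e^{i\theta}z$, in particular a Möbius transformation; since $\varphi$ and $\psi$ are Möbius, so is $f=\psi^{-1}\circ g\circ\varphi^{-1}$. The only genuine subtleties are thus the bookkeeping of the degenerate case $x=y$ (where equality is automatic and carries no information about $f$, so the ``if and only if'' must be read for pairs of distinct points) and the correct invocation of the rigidity half of the Schwarz lemma; the inequality itself is immediate once the reduction to a map fixing the origin is in place.
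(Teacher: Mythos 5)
Your proof is correct. Note that the paper does not actually prove this proposition; it only cites Gamelin, so there is no internal argument to compare against. What you have written is the standard Schwarz--Pick argument that such a reference would contain: normalize by automorphisms so that the map fixes the origin, apply the classical Schwarz lemma, and convert the modulus inequality into a distance inequality using the explicit value of $\rho_\D(0,z)$ and the invariance of $\rho_\D$ under automorphisms. The equality discussion via the rigidity clause of the Schwarz lemma is also handled properly, including the degenerate pair $x=y$.

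Two small points worth tightening. First, your preliminary computation of $\rho_\D(0,z)$ implicitly assumes the radial segment realizes the infimum in Definition \ref{diskhyp}; this needs the one-line comparison $\int_\gamma \frac{2|dz|}{1-|z|^2}\ge\int_0^{|z|}\frac{2\,dt}{1-t^2}$ for an arbitrary curve $\gamma$ from $0$ to $z$ (project onto the modulus), after which monotonicity in $|z|$ is immediate. Second, the phrase ``M\"obius transformation'' in the statement must be read as ``M\"obius self-map of $\D$ onto itself'': a map such as $z\mapsto z/2$ is M\"obius and analytic $\D\to\D$ but gives strict inequality. Your argument in fact proves the correct version --- equality at one pair of distinct points forces $f$ to be an automorphism of $\D$ --- so you are proving the statement as intended, but it is worth saying explicitly that the conclusion is that $f$ is a conformal automorphism, not merely the restriction of some M\"obius map.
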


While the hyperbolic metric is a powerful tool in function theory, this approach also has two important limitations:
\begin{enumerate}
\item In $\Rn$, for $n\ge 3$, the hyperbolic metric cannot be defined for a domain which is not an image of the unit ball in a M\"obius transformation.
\item Even in the plane, there are no actual formulas for hyperbolic distance, except in the case of a few domains for which the Riemann mapping can be found analytically.
\end{enumerate}
We would like to have a metric which could be defined for any domain of interest and which could be easily computed, or at least estimated. Ideally, we would like this metric to be as similar to the hyperbolic metric as possible in other respects. This leads us the quasihyperbolic metric. 

\subsection{Quasihyperbolic and distance ratio metrics}

The quasihyperbolic metric was first introduced by F.W. Gehring and B.P. Palka in 1976 \cite{gp}, and it has been studied by numerous authors thereafter.

\begin{definition}
Let $\Omega$ be a proper subdomain of the Euclidean space $\Rn$, $n \ge 2$. We define the \emph{quasihyperbolic length} of a rectifiable arc $\gamma \subset \Omega$ by
\[
  \ell_k(\gamma) = \int_{\alpha}\frac{|dz|}{d(z,\partial \Omega)}.
\]
The \emph{quasihyperbolic metric} is defined by
\[
  k_\Omega(x,y) = \inf_\gamma \ell_k(\gamma),\index{hyperbolic metric, $k_\Omega$}
\]
where the infimum is taken over all rectifiable curves in $\Omega$ joining $x$ and $y$. If the domain $\Omega$ is clear from the context we use notation $k$ instead of $k_\Omega$.
\end{definition}

Clearly, for $\Omega = \Hn$ the quasihyperbolic metric is the same as the hyperbolic metric, but in the case of the ball it is not. The quasihyperbolic metric of the ball is connected to the hyperbolic metric by the following inequalities (see \cite[3.3]{vu2}):
\[
\rho_{\Bn}(x,y) \le 2 k_{\Bn}(x,y) \le 2\rho_{\Bn}(x,y),\qquad x,y\in \Bn.
\]

Obviously, it follows that quasihyperbolic metric is not, in general, invariant under conformal mappings, not even in M\"obius transformations. However, we have the following result:

\begin{proposition}
\cite[3.10]{vu2}
If $\Omega,\Omega'$ are proper subdomains of $\Rn$ and $f\colon \Omega\to \Omega'=f(\Omega)$ is a M\"obius transformation, then
\[
\frac{1}{2}k_\Omega(x,y) \le k_{\Omega'}\big(f(x),f(y)\big)\le 2 k_\Omega(x,y),
\]
for all $x,y\in \Omega$.
\end{proposition}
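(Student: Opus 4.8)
My plan is to prove the right-hand inequality $k_{\Omega'}(f(x),f(y))\le 2\,k_\Omega(x,y)$ and then deduce the left-hand one for free by applying it to the inverse map $f^{-1}\colon\Omega'\to\Omega$, which is again a M\"obius transformation carrying $\Omega'$ onto $\Omega$. For the right-hand inequality I would pass from the metric to the length element. Writing $|f'(z)|$ for the (conformal) linear dilatation of $f$ at $z$, any rectifiable arc $\gamma\subset\Omega$ joining $x,y$ is carried by $f$ to a rectifiable arc $f\circ\gamma\subset\Omega'$ joining $f(x),f(y)$, and the change of variables $w=f(z)$ gives
\[
\ell_k(f\circ\gamma)=\int_{f\circ\gamma}\frac{|dw|}{d(w,\partial\Omega')}
=\int_{\gamma}\frac{|f'(z)|}{d(f(z),\partial\Omega')}\,|dz|.
\]
Hence it suffices to prove the pointwise distortion estimate
\[
\frac12\le\frac{|f'(z)|\,d(z,\partial\Omega)}{d(f(z),\partial\Omega')}\le 2,
\qquad z\in\Omega,
\]
since then the integrand above is squeezed between $\tfrac12$ and $2$ times $1/d(z,\partial\Omega)$, so $\ell_k(f\circ\gamma)\le 2\,\ell_k(\gamma)$, and taking the infimum over all admissible $\gamma$ yields the claim.

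The tool for the distortion estimate is the metric distortion identity for M\"obius transformations,
\[
|f(u)-f(v)|=|f'(u)|^{1/2}\,|f'(v)|^{1/2}\,|u-v|,
\]
which I would verify directly for a single inversion (and trivially for similarities) and then propagate to arbitrary M\"obius maps by the chain rule, since both sides are multiplicative under composition. Because a M\"obius transformation is a homeomorphism of $\overline{\Rn}$ taking $\partial\Omega$ onto $\partial\Omega'$, minimizing over boundary points and factoring out the value at $z$ gives
\[
d(f(z),\partial\Omega')=|f'(z)|^{1/2}\,\inf_{a\in\partial\Omega}|f'(a)|^{1/2}\,|z-a|,
\]
so that the quantity to be bounded becomes
\[
\frac{|f'(z)|\,d(z,\partial\Omega)}{d(f(z),\partial\Omega')}
=\frac{|f'(z)|^{1/2}\,d(z,\partial\Omega)}{\inf_{a\in\partial\Omega}|f'(a)|^{1/2}\,|z-a|}.
\]

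The crux, and the step I expect to be the main obstacle, is to show that the conformal factor cannot distort the weighted boundary distance $|f'(a)|^{1/2}|z-a|$ away from $|f'(z)|^{1/2}d(z,\partial\Omega)$ by more than a factor of two. One half is soft: testing the infimum at the nearest boundary point $a_0$, where $|z-a_0|=d(z,\partial\Omega)$, bounds the displayed ratio below by $(|f'(z)|/|f'(a_0)|)^{1/2}$, reducing that side to controlling the oscillation of $\log|f'|$ between $z$ and $a_0$. The other direction, requiring that \emph{no} boundary point make the weighted distance too small uniformly, is where the genuine difficulty lies. Here I expect obtaining the sharp constant $2$ to be delicate, because the oscillation of $|f'|$ is genuinely large already for a single inversion; one therefore cannot use crude estimates, but must exploit the M\"obius structure, rewriting $|f'(a)|/|f'(z)|$ through the cross-ratio identity as a comparison of Euclidean distances from $z$ and $a$ to the pole of $f$, and then comparing only against the competing boundary points, which lie no closer to $z$ than $d(z,\partial\Omega)$. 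This is the step where the position of $\partial\Omega$ relative to the pole of $f$ enters, and where I would expect to have to work to pin the constant down; feeding the resulting bound back through the two displays gives the pointwise estimate, and hence the proposition.
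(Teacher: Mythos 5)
The paper itself offers no argument here: the proposition is quoted from \cite[3.10]{vu2} with the remark that the original proof is due to Gehring and Palka, so there is no in-paper proof to compare against and I assess your plan on its own merits. Your reduction is the standard, correct one — pass to length elements, reduce everything to the pointwise estimate $\tfrac12\le |f'(z)|\,d(z,\partial\Omega)/d(f(z),\partial\Omega')\le 2$, and use the M\"obius identity $|f(u)-f(v)|=|f'(u)|^{1/2}|f'(v)|^{1/2}|u-v|$. But the proposal stops exactly at the step that constitutes the proof, and the one substep you do commit to, the ``soft half'' via the nearest boundary point $a_0$, does not work: the quantity $|f'(a_0)|/|f'(z)|$ that you would need to bound equals $|z-p|^2/|a_0-p|^2$, where $p$ is the pole of $f$, and this is unbounded — indeed infinite when the pole lies on $\partial\Omega$ and is itself the nearest boundary point (e.g.\ $\Omega=\Rn\setminus\{p\}$, $f$ the inversion at $p$). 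So as written neither half of the pointwise estimate is established.

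The good news is that the missing step is a three-line triangle inequality, and only one direction needs it; the other follows by applying the completed direction to $f^{-1}$, exactly as you already do at the level of the metric, rather than by your nearest-point test. Writing $|f'(x)|=c\,|x-p|^{-2}$ and $d=d(z,\partial\Omega)$, one must show $|f'(a)|^{1/2}|z-a|\ge\tfrac12|f'(z)|^{1/2}d$ for \emph{every} $a\in\partial\Omega$, which after clearing denominators reads $2|z-a|\,|z-p|\ge d\,|a-p|$. Since $p\notin\Omega$ (otherwise $\infty\in\Omega'$) we have $d\le|z-p|$, and $d\le|z-a|$ because $a\in\partial\Omega$; hence $d|a-p|\le d|a-z|+d|z-p|\le |z-p|\,|a-z|+|a-z|\,|z-p|$, which is the claim. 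This yields $d(f(z),\partial\Omega')\ge\tfrac12|f'(z)|\,d(z,\partial\Omega)$; the reverse inequality follows from the same bound applied to $f^{-1}$ at the point $f(z)$, and the case of a similarity (no pole) is trivial. You should also say a word about the boundary point at infinity when $\Omega$ is unbounded — the identity and the infimum extend by a limiting argument — but that is routine. With these additions your outline becomes a complete proof along the classical Gehring--Palka lines.
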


The proof of the above was first given by Gehring and Palka \cite{gp}, where a generalization of the result for quasiconformal mappings was also obtained.

In the case of the hyperbolic metric, it is easy to characterize the geodesics of this metric at least in the case of the ball and half-space. The properties, or even existence, of quasihyperbolic geodesics is not immediately clear, and they have been studied by several authors. In $\Rn$ it is, however, true that quasihyperbolic metric is always geodesic. Even this is not true in the more general setting of Banach spaces which we will consider later in this paper.

Their results already reveal that the quasihyperbolic metric is useful in study of geometric geometric function theory, but one crucial problem remains. While the quasihyperbolic metric is easy to define, we do not yet have any effective estimates for it. For this reason, we need yet another definition.

\begin{definition}
The \emph{distance ratio metric} or \emph{$j$-metric} in a proper subdomain $\Omega$ of the Euclidean space $\Rn$, $n \ge 2$, is defined by
\[
  j_\Omega(x,y) = \log \left( 1+\frac{|x-y|}{\min \{ d(x),d(y) \}}\right),
\]
where $d(x)$ is the Euclidean distance between $x$ and $\partial \Omega$. 
\end{definition}

Again, if the domain $\Omega$ is clear from the context, we use notation $j$ instead of $j_\Omega$. The distance ratio metric by F.W. Gehring and B.G. Osgood in 1979 \cite{go}.  The above form for the distance ratio metric was introduced by M. Vuorinen \cite{vu1}.

A useful inequality connecting this the $j$-metric to the quasihyperbolic metric is the following \cite[Lemma 2.1]{gp}:
\[
k_\Omega(x,y) \ge j_\Omega(x,y),\qquad x,y\in \Omega.
\]
An inequality to the other direction does not, in general, hold. For example, one may consider the slit plane $\Omega = \C \setminus \R_+$, and two points $s\pm ti$, where $s,t>0$. When $s\to +\infty$ and $t$ remains fixed the $j$-metric distance of the points does not change, but the quasihyperbolic distance of the points goes to infinity. If the inequality
\[
k_\Omega(x,y) \le c\,j_\Omega(x,y),\qquad x,y\in \Omega,
\]
holds for all $x,y\in \Omega$ where $c\ge 1$ is a constant, then we say that the domain $\Omega$ is \emph{uniform}. There are several equivalent ways of defining uniformity of a domain. Uniformity of different domains and the respective constants of uniformity have been studied by numerous authors, see e.g. \cite{vu2, lin}.

For a domain $G \subsetneq \Rn$ and a metric $m \in \{ k_G,j_G \}$ we define the \emph{metric ball} (\emph{metric disk} in the case $n = 2$) for $r > 0$ and $x \in G$ by
\[
  \Ball{m}{x}{r} = \{ y \in G \colon m(x,y) < r \}.
\]
We call $\Ball{k}{x}{r}$ the \emph{quasihyperbolic ball} and $\Ball{j}{x}{r}$ the \emph{$j$-metric ball}. A domain $G \subsetneq \Rn$ is \emph{starlike with respect to} $x \in G$ if for all $y \in G$ the line segment $[x,y]$ is contained in $G$ and $G$ is \emph{strictly starlike with respect to} $x$ if each half-line from the point $x$ meets $\partial G$ at exactly one point. If $G$ is (strictly) starlike with respect to all $x \in G$ then it is \emph{(strictly) convex}. A domain $G \subsetneq \Rn$ is \emph{close-to-convex} if $\Rn \setminus G$ can be covered with non-intersecting half-lines. By half-lines we mean sets $\{ x \in \Rn \colon x = t y+z, \, t > 0 \}$ and $\{ x \in \Rn \colon x = t y+z, \, t \ge 0 \}$ for $z \in \Rn $ and $y \in \PS$. We define metric balls, convexity and starlikeness similarly in Banach spaces.

Clearly convex domains are starlike and starlike domains are close-to-convex as well as complements of close-to-convex domains are starlike with respect to infinity. However, close-to-convex sets need not be connected. An example of a close-to-convex disconnected set is the union of two disjoint convex domains. We use notation $B^n(x,r)$ and $S^{n-1}(x,r)$ for Euclidean balls and spheres, respectively, with radius $r > 0$ and center $x \in \Rn$.

\section{Metric balls in subdomains of $\Rn$}

In this section we consider properties of the quasihyperbolic metric and $j$-metric balls in subdomains of $\Rn$.

For the quasihyperbolic metric the explicit formula is known only in a few special domains like half-space, where it agrees with the usual hyperbolic metric, and punctured space, where the formula was introduced by G.J. Martin and B.G. Osgood in 1986 \cite{mo}. They proved that for $x,y \in \PS$ and $n \ge 2$
\begin{equation}\label{MOformula}
  k_\PS(x,y) = \sqrt{\alpha^2+\log^2\frac{|x|}{|y|}},
\end{equation}
where $\alpha \in [0,\pi]$ is the angle between line segments $[x,0]$ and $[0,y]$ at the origin. This implies that the quasihyperbolic geodesics in $\PS$ are logarithmic spirals, circular arcs or line segments. All other domains, where the explicit formula for the quasihyperbolic is known are derived from the half-space and the punctured space.

For any domain $\Omega \subset \Rn$ and points $x,y \in \Omega$ there exists a quasihyperbolic geodesic \cite{go}. This implies that the quasihyperbolic balls in $\Omega$ are always connected. In convex plane domains $\Omega \subset \R^2$ the quasihyperbolic circles $\partial B_k$ are always $C^1$ smooth Jordan curves \cite[Corollary 5.14]{v1}. In a general domain $\Omega \subset \Rn$ the quasihyperbolic ball need not be $C^1$ smooth. However, the quasihyperbolic balls are smooth except from possible inwards pointing cusps \cite[Theorem 5.10]{v1}.

V\"ais\"al\"a posed \cite[p. 448]{v2} the following conjectures:
\begin{conjecture}
  Let $\Omega \subset \Rn$ and $x,y \in \Omega$.
  \begin{itemize}
    \item[(1)] Uniqueness conjecture: There is a universal constant $c_u > 0$ such that if $k(x,y) < c_u$, then there is only one quasihyperbolic geodesic from $x$ to $y$.\\
    \item[(2)] Prolongation conjecture: There is a universal constant $c_p > 0$ such that if $\gamma$ is a quasihyperbolic geodesic from $x$ to $y$ with $\ell_k(\gamma) =k(x,y) < c_p$, then there is a quasihyperbolic geodesic $\gamma'$ from $x$ to $y'$ such that $\gamma \subset \gamma'$ and $\ell (\gamma') = c_p$.\\
    \item[(3)] Convexity conjecture: There is a universal constant $c_c > 0$ such that the quasihyperbolic ball $B_k(x,r)$ is strictly convex for all $x \in \Omega$ and $r < c_c$.
  \end{itemize}
\end{conjecture}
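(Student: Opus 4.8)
The plan is to attack all three conjectures through one principle: on sufficiently small scales the quasihyperbolic metric is a controlled perturbation of a rescaled Euclidean metric, for which each statement is elementary. Fix a centre $x\in\Omega$ and put $\delta=d(x,\partial\Omega)$. Since $z\mapsto d(z,\partial\Omega)$ is $1$-Lipschitz, on the Euclidean ball $B^n(x,s)$ with $s<\delta$ one has $\delta-s\le d(z,\partial\Omega)\le\delta+s$, so the density $1/d(z,\partial\Omega)$ differs from the constant $1/\delta$ by a factor tending to $1$ as $s\to0$. Moreover the inequality $k\ge j$ of \cite[Lemma 2.1]{gp} gives $B_k(x,r)\subset B^n\big(x,\delta(e^r-1)\big)$, so a quasihyperbolic ball of small radius is Euclidean-small and on it the quasihyperbolic metric is uniformly close to $|dz|/\delta$.

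For the convexity conjecture (3) I would write the quasihyperbolic sphere as a radial graph $z=x+R(\theta)\theta$, $\theta\in S^{n-1}$, and expand. Estimating $k$ along nearly straight competitors gives $k(x,x+t\theta)=t/\delta-\tfrac{1}{2}\langle\nabla d(x),\theta\rangle\,t^2/\delta^2+\cdots$ wherever $d$ is differentiable, and solving $k=r$ yields
\[
R(\theta)=\delta r+\tfrac{1}{2}\,\delta r^2\,\langle\nabla d(x),\theta\rangle+O(r^3).
\]
The leading term is a Euclidean sphere of radius $\delta r$, strictly convex with all normal curvatures equal to $1/(\delta r)$, and the first correction is a pure first spherical harmonic, that is, the linearisation of a translation, which preserves convexity. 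The strategy is then to bound the remaining terms so that they perturb the normal curvature by less than this positive lower bound once $r$ lies below a universal threshold, whence strict convexity persists.

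The main obstacle, and the reason the statement is a conjecture rather than a routine perturbation, is that $d(\cdot,\partial\Omega)$ is only Lipschitz: it fails to be differentiable exactly on the medial axis of $\Omega$, where the nearest boundary point is not unique and where it acquires concave kinks. These kinks are precisely the inward-pointing cusps left open in \cite[Theorem 5.10]{v1}. When the centre $x$ lies on or near the medial axis, even arbitrarily small balls feel this non-smoothness, so one must bound, uniformly over all domains $\Omega$ and all centres $x$, how far a concave kink of $d$ can bend the quasihyperbolic sphere inward. Producing such a bound independent of $\Omega$ is the crux, and it is exactly this demand for a universal constant $c_c$ that the local-flatness heuristic does not by itself supply.

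The uniqueness (1) and prolongation (2) conjectures should yield to the same philosophy. For (1), when $k(x,y)$ is small both endpoints lie in a common small Euclidean ball on which the metric is within a factor $1+o(1)$ of the flat one, so comparing any two competing geodesics with the straight segment and invoking the strict convexity of the flat length functional ought to force them to coincide, again provided the estimates are uniform in $\Omega$. For (2), a geodesic of length at most $c_p$ stays in a region where the local existence of geodesics permits appending a short terminal segment, and the point is to check that the concatenation remains globally length-minimising up to the prescribed length; here too the genuine obstruction is uniformity in $\Omega$ rather than any single explicit computation.
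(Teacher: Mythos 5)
The statement you are addressing is not a theorem of this paper but an open conjecture of V\"ais\"al\"a, quoted from \cite[p.~448]{v2}; the paper supplies no proof, only the known logical implications among the three parts (Convexity implies Uniqueness with $c_u=2c_c$, Uniqueness implies Prolongation with $c_p=c_u\wedge\pi/2$), the planar case ($c_u=c_p=2$, $c_c=1$), and the convex case where all radii work. So there is no proof in the paper to compare against, and your write-up is a strategy sketch rather than a proof --- which, to your credit, you essentially acknowledge.

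The gap you name is indeed the fatal one, but it is worth being precise about why it cannot be patched by sharpening the estimates. Your expansion $k(x,x+t\theta)=t/\delta-\tfrac12\langle\nabla d(x),\theta\rangle\,t^2/\delta^2+\cdots$ with an $O(r^3)$ remainder presupposes that $d(\cdot,\partial\Omega)$ is at least $C^{1,1}$ near $x$; but $d$ is merely $1$-Lipschitz, and it fails to be differentiable on the medial axis, which $B_k(x,r)$ can meet for \emph{every} $r>0$ (take $x$ on the medial axis itself, e.g.\ the midpoint in $\Rn\setminus\{-e_1,e_1\}$ or in a slab). At such points the density $1/d$ has a concave kink whose contribution to the second fundamental form of the quasihyperbolic sphere is of the \emph{same} order as the leading curvature $1/(\delta r)$, not smaller; so the claim that the remainder perturbs the normal curvature by less than its flat-model value is not a perturbative estimate waiting to be checked --- it is the content of the conjecture. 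The uniqueness argument has the same defect in sharper form: two length metrics that agree up to a multiplicative $1+o(1)$ can have completely different geodesic multiplicity (uniqueness of geodesics is a second-order phenomenon, while bi-Lipschitz closeness is zeroth-order), so comparison with the straight segment yields length bounds but never uniqueness. What is actually provable by your local-flatness philosophy is already in the literature: the planar case via the explicit punctured-plane formula and \cite[Corollary 5.14]{v1}, and the convex case \cite[2.13]{MartioVaisala11}, where $d$ is concave and the kinks only help. The general case for $n\ge3$ remains open precisely for want of the uniform medial-axis bound your argument would require.
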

V\"ais\"al\"a also proved that for $\Omega \subset \Rn$ the Convexity conjecture implies the Uniqueness conjecture (with $c_u = 2c_c$), the Uniqueness conjecture implies the Prolongation conjecture (with $c_p = c_u \wedge \pi/2$) and for any $\Omega \subset \R^2$ the above conjectures hold with $c_u=2$, $c_p=2$ and $c_c=1$. Presumably the constant $c_u = 2$ is not sharp, see Example \ref{kinPP}, \cite[Remark 6.18]{k3}. In convex domains $\Omega \subset \Rn$ the quasihyperbolic geodesics are always unique \cite[Theorem 2.11]{MartioVaisala11} and moreover, the quasihyperbolic balls are always strictly convex \cite[2.13]{MartioVaisala11}.

The question of convexity of the hyperbolic type metric balls was posed by M. Vuorinen in 2006 \cite[8.1]{vu2}. Soon after that a series of papers \cite{k1,k2,k4,MartioVaisala11,RasilaTalponen,v2} was published on the problem in the case of the quasihyperbolic metric and the $j$-metric. We review the main results in $\Rn$.

\subsection{Quasihyperbolic metric}

We introduce some convexity properties of quasihyperbolic metric balls in the domain $\Omega = \PS$. For each result the idea is the following: first prove the result in the case $n=2$ by using \eqref{MOformula} and then generalize the result to $n>2$ by symmetry of the domain. For more details about the proofs see the original papers.

Before the convexity results we define two constants. The constant $\kappa$ is the solution of the equation
\begin{equation}\label{kappa}
    \cos \sqrt{p^2-1}+\sqrt{p^2-1}\sin \sqrt{p^2-1} = e^{-1}
\end{equation}
for $p \in [1,\pi]$, and the constant $\lambda$ is the solution of the equation
\begin{equation}\label{lambda}
  \cos \sqrt{p^2-1} + \sqrt{p^2-1} \sin \sqrt{p^2-1} = 0
\end{equation}
for $p \in (2,\pi)$.

\begin{theorem}
\label{quasiconv-k2}
  1) For $x \in \PS$ the quasihyperbolic ball $B_k(x,r)$ is strictly convex for $r \in (0,1]$ and it is not convex for $r > 1$.

  \noindent 2) For $x \in \PS$ the quasihyperbolic ball $B_k(x,r)$ is strictly starlike with respect to $x$ for $r \in (0,\kappa]$ and it is not starlike with respect to $x$ for $r > \kappa$, where $\kappa$ is defined by (\ref{kappa}) and has a numerical approximation $\kappa \approx 2.83297$.
\end{theorem}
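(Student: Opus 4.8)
The plan is to follow the scheme announced before the theorem: settle the case $n=2$ by means of the Martin--Osgood formula \eqref{MOformula}, and then pass to $n>2$ using the rotational symmetry of $\PS$. Since $k_\PS$ is invariant under scalings $z\mapsto\lambda z$ and under orthogonal maps fixing the origin, I would first normalize $|x|=1$ and take the line through $0$ and $x$ as a coordinate axis. Because $k_\PS(x,y)$ depends only on $|x|,|y|$ and the angle $\alpha$ at the origin, the ball $B_k(x,r)$ is a body of revolution about this axis whose meridian section is exactly the planar ball $B_k(x,r)$ in a two-plane through the axis. A convex (resp.\ starlike with respect to a point on the axis) planar section that is symmetric about the axis generates a convex (resp.\ starlike) body of revolution, and conversely, so it suffices to treat $n=2$.

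For $n=2$, writing $z=u+iv$ and placing the center at $x=1$, formula \eqref{MOformula} gives $k_\PS(1,e^z)=\sqrt{u^2+v^2}=|z|$ whenever $|v|\le\pi$. Hence for $r\le\pi$ the planar ball is the image $\exp(\{|z|<r\})$ of a Euclidean disk under the exponential map, with boundary curve $w(t)=\exp(re^{it})$, $t\in[0,2\pi)$. I would compute the signed curvature of $w$ from $w'=i\zeta e^{\zeta}$, $w''=-\zeta(1+\zeta)e^{\zeta}$ (with $\zeta=re^{it}$) and the formula $\Im(\overline{w'}w'')/|w'|^3$; a short computation shows its sign equals that of $1+r\cos t$. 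Thus the curvature is strictly positive for $r<1$, nonnegative with a single zero at $t=\pi$ for $r=1$, and changes sign for $r>1$, giving strict convexity for $r\in(0,1]$. For $r>1$ the curvature changes sign where the exponential description applies, and for larger $r$ the ball contains two points symmetric about the origin (e.g.\ $\pm i|x|$, at distance $\pi/2$) whose joining segment meets $0\notin\PS$; in either case the ball is not convex. This proves part~1).

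For part~2) I would express strict starlikeness with respect to $x=1$ through the strict monotonicity of $t\mapsto\arg(w(t)-1)$, i.e.\ through the sign of $\Im\big(w'/(w-1)\big)$. Using $\Im(i\omega)=\Re(\omega)$ one finds $\Im\big(w'/(w-1)\big)=\Re\big(\zeta/(1-e^{-\zeta})\big)$, whose sign (after clearing the positive denominator $|1-e^{-\zeta}|^2$) is that of
\[
g(u,v)=u-e^{-u}\big(u\cos v-v\sin v\big),\qquad \zeta=u+iv .
\]
Strict starlikeness is equivalent to $g>0$ on the circle $u^2+v^2=r^2$. The key algebraic point is that at any boundary point which is simultaneously a zero of $g$ and a critical point of $g$ along the circle (a tangential zero), the two equations, after eliminating $e^{-u}\sin v$, force $u^2+v^2=-ur^2$ and hence $u=-1$; substituting $u=-1$, $v=\sqrt{r^2-1}$ into $g=0$ yields precisely \eqref{kappa}, which defines $\kappa$. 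Since $g\approx r^2>0$ for small $r$, a continuity/first-tangency argument then gives $g>0$ on the circle for all $r<\kappa$, while for $r=\kappa$ one has $g\ge0$ with only the isolated zero at $u=-1$; as this zero is isolated, $\arg(w-1)$ is still strictly increasing, so the ball is strictly starlike for $r\in(0,\kappa]$. For $r>\kappa$ I would evaluate $g(-1,\sqrt{r^2-1})=-1+e\,h(r)$ with $h(r)=\cos\sqrt{r^2-1}+\sqrt{r^2-1}\sin\sqrt{r^2-1}$ and use $h'(r)=r\cos\sqrt{r^2-1}<0$ near $\kappa$ to conclude $g<0$ there, so starlikeness fails.

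The main obstacles are not the curvature or sign computations but the two places where global control is needed: justifying the surface-of-revolution reductions (that a symmetric convex/starlike meridian section produces a convex/starlike body, and conversely), and, for part~2), proving that the first loss of starlikeness as $r$ grows is indeed the tangential zero at $u=-1$ rather than a sign change occurring elsewhere on the circle. The clean fact that a tangential zero of $g$ is algebraically forced to lie at $u=-1$ is exactly what makes the continuity argument decisive and pins the threshold to \eqref{kappa}.
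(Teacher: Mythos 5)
Your proposal follows essentially the same route as the paper: the paper gives no proof of its own but cites \cite[Theorem 1.1]{k2}, and the strategy it announces --- settle $n=2$ via the Martin--Osgood formula \eqref{MOformula} and lift to $n>2$ by the rotational symmetry of $\PS$ --- is exactly what you carry out, including the curvature sign $1+r\cos t$ for convexity and the monotonicity of $\arg(w(t)-1)$ for starlikeness. One loose end: the Lagrange condition for a tangential zero of $g$ on the circle $u^2+v^2=r^2$ factors (after using $g=0$) as $(1+u)\bigl(v-e^{-u}(v\cos v+u\sin v)\bigr)=0$, and the second factor together with $g=0$ forces $\sin v=0$, i.e. $(u,v)=(0,\pm\pi)$; this extra branch lives only on the circle $r=\pi>\kappa$, so your conclusion stands, but it must be ruled out explicitly rather than asserting that tangential zeros are algebraically forced to $u=-1$.
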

\begin{proof}
  \cite[Theorem 1.1]{k2}.
\end{proof}

\begin{figure}[ht!]
  \begin{center}
    \includegraphics[width=5cm]{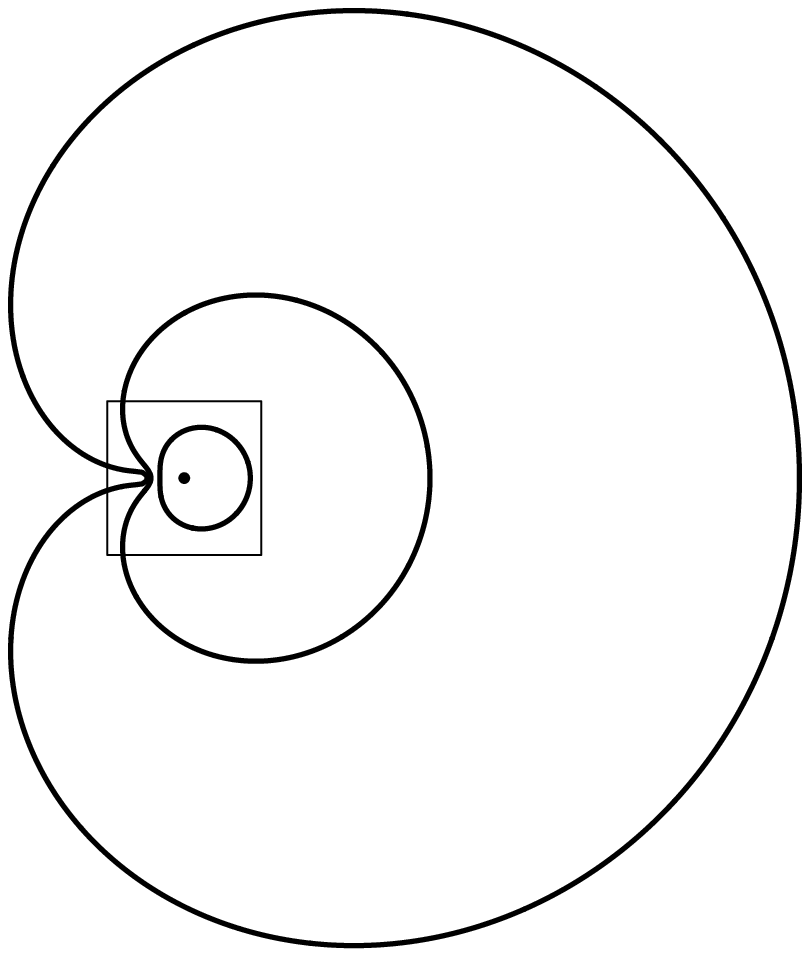}\hspace{1cm}
    \includegraphics[width=5cm]{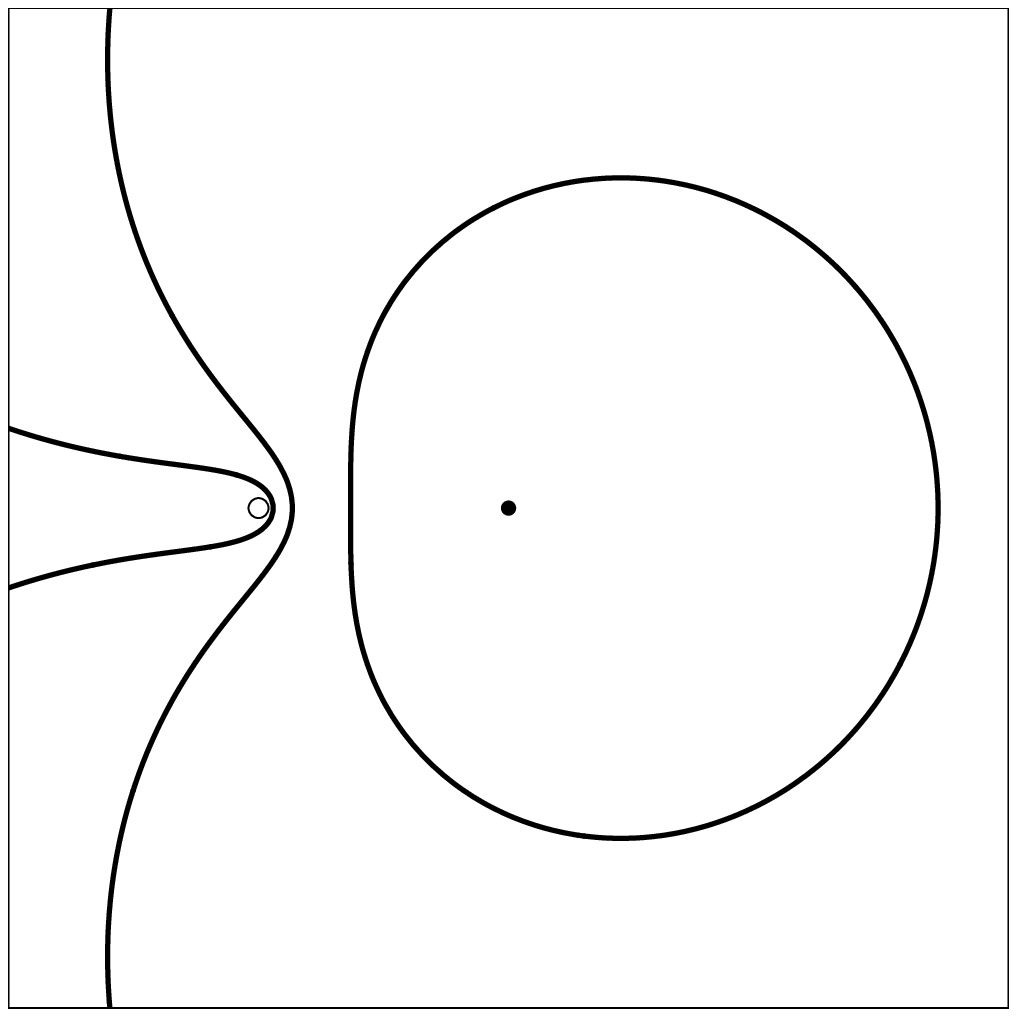}
    \caption{Boundaries of quasihyperbolic disks $B_k(x,r)$ in $\PP$ with radii $r=1$, $r=2$ and $r=\kappa$.}
  \end{center}
\end{figure}

\begin{theorem}
  If domain $\Omega \subsetneq \Rn$ is starlike with respect to $x \in \Omega$, then the quasihyperbolic ball $B_k(x,r)$ is starlike with respect to $x$ for all $r > 0$.
\end{theorem}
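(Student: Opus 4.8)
The plan is to reduce the whole statement to a single monotonicity fact: if $\Omega$ is starlike with respect to $x$, then the quasihyperbolic distance from $x$ does not increase as one moves toward $x$ along a ray. Concretely, writing a point of the segment $[x,y]$ as $z = x + t(y-x)$ with $t \in [0,1]$, I would show that
\[
  k(x,z) \le k(x,y).
\]
Granting this, the theorem is immediate: if $y \in B_k(x,r)$ then $k(x,y) < r$, hence $k(x,z) < r$ for every $z \in [x,y]$; and since starlikeness of $\Omega$ guarantees $[x,y] \subset \Omega$, the whole segment lies in $B_k(x,r)$, which is precisely starlikeness of the ball with respect to $x$.

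To prove the monotonicity I would introduce the radial contraction toward $x$, namely $f_t(w) = x + t(w-x)$ for $t \in [0,1]$. Since $f_t(w)$ lies on the segment $[x,w]$, starlikeness of $\Omega$ gives $f_t(\Omega) \subset \Omega$, and clearly $f_t(x)=x$ and $f_t(y)=z$. The idea is then to push forward competitor curves: given any rectifiable arc $\gamma$ from $x$ to $y$ in $\Omega$, the image $f_t\circ\gamma$ is an admissible arc from $x$ to $z$, and I would compare their quasihyperbolic lengths.

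The heart of the matter, and the step I expect to be the main obstacle, is the distance estimate
\[
  d(f_t(w),\partial\Omega) \ge t\, d(w,\partial\Omega), \qquad w \in \Omega,\ t \in [0,1].
\]
I would prove this geometrically: the Euclidean ball $B^n(w, d(w,\partial\Omega))$ lies in $\Omega$, and the homothety $f_t$ scales distances by $t$, so it carries this ball onto $B^n(f_t(w), t\,d(w,\partial\Omega))$; because $f_t(\Omega)\subset\Omega$, the scaled ball is contained in $\Omega$, which forces $d(f_t(w),\partial\Omega)\ge t\,d(w,\partial\Omega)$. With this in hand the length comparison is routine, since $f_t$ scales the line element by $t$:
\[
  \ell_k(f_t\circ\gamma) = \int_\gamma \frac{t\,|dw|}{d(f_t(w),\partial\Omega)} \le \int_\gamma \frac{t\,|dw|}{t\,d(w,\partial\Omega)} = \ell_k(\gamma).
\]
Taking the infimum over all arcs $\gamma$ joining $x$ and $y$ yields $k(x,z)\le k(x,y)$, completing the monotonicity claim and hence the theorem. (One could alternatively apply the contraction to a quasihyperbolic geodesic, whose existence in $\Rn$ was recalled above, but passing to the infimum over curves sidesteps the need for that.) I would finally check the degenerate parameters $t=0,1$ and the behaviour when $\gamma$ runs close to $\partial\Omega$, but I expect no genuine difficulty there.
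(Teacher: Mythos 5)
Your argument is correct: the homothety $f_t(w)=x+t(w-x)$ maps $\Omega$ into itself by starlikeness, the ball-image argument gives $d(f_t(w),\partial\Omega)\ge t\,d(w,\partial\Omega)$, and the resulting length comparison yields the monotonicity $k(x,z)\le k(x,y)$ along rays, from which starlikeness of $B_k(x,r)$ follows. This is essentially the same radial-contraction argument used in the paper's source for this theorem (Kl\'en, Theorem 2.10 of \cite{k2}), which the paper itself only cites; your variant of taking the infimum over all competitor curves rather than contracting a geodesic is an inessential difference.
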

\begin{proof}
  \cite[Theorem 2.10]{k2}.
\end{proof}

\begin{theorem}
  For $y \in \PS$ the quasihyperbolic ball $\Ball{k}{y}{r}$ is close-to-convex, if $r \in (0,\lambda]$, where $\lambda$ is defined by (\ref{lambda}) and has a numerical approximation $\lambda \approx 2.97169$. Moreover, the constant $\lambda$ is sharp in the case $n = 2$.
\end{theorem}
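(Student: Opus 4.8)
The plan is to settle the planar case $n=2$ first by means of the Martio--Osgood formula \eqref{MOformula}, and then to lift the result to $n>2$ using the rotational symmetry of $\PS$. Since $k_\PS$ is invariant under rotations about the origin and under the scalings $x\mapsto tx$, $t>0$ (both immediate from \eqref{MOformula}), we may normalize the centre to $y=e_1$, a unit vector on a fixed axis. For $n=2$ I would introduce the logarithmic coordinate $w=\log|x|+i\arg x$; then \eqref{MOformula} says exactly that $k_{\PP}(e_1,x)=|w|$, so $\Ball{k}{e_1}{r}$ is the image, under the exponential map $x=e^w$ (injective for $r<\pi$, which covers our range), of the round Euclidean disk $\{|w|<r\}$. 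Thus $\Ball{k}{e_1}{r}=\exp(\{|w|<r\})$ is a conformal image of a disk, and its convexity-type behaviour can be read off the boundary curve $\beta\mapsto\exp(re^{i\beta})$.

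The key quantity is the change of the tangent angle along this curve. Writing $\phi(\zeta)=\exp(r\zeta)$ one computes $1+\zeta\phi''/\phi'=1+r\zeta$, so along $|\zeta|=1$ the tangent angle has derivative $\operatorname{Re}(1+r e^{i\theta})=1+r\cos\theta$. A bounded $C^1$ Jordan domain fails to be close-to-convex precisely when this tangent angle decreases by more than $\pi$ on some sub-arc (the classical Kaplan criterion, which for a Jordan domain coincides with the covering of the complement by non-intersecting half-lines). The most negative total decrease is obtained by integrating $1+r\cos\theta$ over the arc on which it is negative, namely $\theta\in(\theta_*,2\pi-\theta_*)$ with $\cos\theta_*=-1/r$, giving the value $2\pi-2\theta_*-2r\sin\theta_*$. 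Note that for $r\le 1$ this arc is empty, recovering the convexity statement of Theorem~\ref{quasiconv-k2}.

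Close-to-convexity therefore holds exactly when $2\pi-2\theta_*-2r\sin\theta_*\ge-\pi$, i.e. when $\theta_*+r\sin\theta_*\le 3\pi/2$, and the threshold $r=\lambda$ is the value giving equality. It then remains to rewrite this threshold in the stated form: putting $p=r$ and $s=\sqrt{p^2-1}$ one has $\cos\theta_*=-1/p$, $\sin\theta_*=s/p$, and $\arccos(1/p)=\arctan s$, so $\theta_*=\pi-\arctan s$ and $r\sin\theta_*=s$; hence the equality $\theta_*+r\sin\theta_*=3\pi/2$ reduces to $\arctan s=s-\pi/2$, and applying $\tan$ gives $\cos s+s\sin s=0$, which is \eqref{lambda}. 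Checking that the relevant root lies in $(2,\pi)$ yields $\lambda\approx 2.97169$. The same computation gives sharpness in the plane: for $r>\lambda$ the tangent angle drops by more than $\pi$ on the inner arc of $\partial\Ball{k}{e_1}{r}$ (the part bulging toward the origin), which forces any family of exterior half-lines to cross, so the ball is not close-to-convex.

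Finally, for $n>2$ I would use that $\Ball{k}{y}{r}$ is a solid of revolution: by \eqref{MOformula} the distance $k_\PS(y,x)$ depends only on $|x|$ and on the angle between $x$ and $y$, so $\Ball{k}{y}{r}$ is invariant under the rotations fixing the axis $\ell=\R y$ and is obtained by revolving its planar cross-section. A plane $P\supset\ell$ meets the ball in the planar disk just analyzed, and $P\setminus(\Ball{k}{y}{r}\cap P)$ is covered by non-intersecting half-lines which, by the mirror symmetry across $\ell$, may be chosen to respect the two half-planes bounded by $\ell$. Revolving this cover about $\ell$ sweeps out a cover of $\Rn\setminus\Ball{k}{y}{r}$ by half-lines that remain pairwise disjoint, since half-lines carried by distinct half-planes through $\ell$ meet at most on $\ell$; the bounded segment $\ell\cap\Ball{k}{y}{r}$ (which stays off the opposite ray because $r<\pi$) is then complemented by two axial half-lines. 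The main obstacle I anticipate is exactly this last step: one must arrange the planar cover carefully enough that the revolved half-lines do not collide off the axis and that the whole family, together with the axial rays, genuinely partitions the complement; making precise the equivalence between the tangent-angle criterion used in the plane and the definition by non-intersecting half-lines is the other delicate point.
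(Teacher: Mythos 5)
The paper itself gives no argument for this theorem beyond the citation to \cite[Theorem 1.2]{k3}; all it discloses is the meta-strategy ``prove the case $n=2$ via \eqref{MOformula}, then lift to $n>2$ by symmetry,'' and your proposal follows exactly that outline. Your planar step, however, is executed by a route that is genuinely different in flavour from Kl\'en's direct geometric analysis of the boundary curve: you observe that for $r<\pi$ the ball $\Ball{k}{e_1}{r}$ in $\PP$ is the univalent image of the round disk $\{|\zeta|<1\}$ under $\phi(\zeta)=\exp(r\zeta)$, and then invoke Kaplan's criterion. The computation is correct: $\operatorname{Re}(1+\zeta\phi''/\phi')=1+r\cos\theta$, the extremal arc is $\{1+r\cos\theta<0\}$, the worst total decrease is $2\pi-2\theta_*-2r\sin\theta_*$ with $\cos\theta_*=-1/r$ (and one checks the interior radii $\rho<1$ are less restrictive), and the threshold condition $\theta_*+r\sin\theta_*=3\pi/2$ does reduce, via $s=\sqrt{r^2-1}$, $\theta_*=\pi-\arctan s$, $r\sin\theta_*=s$, to $\cos s+s\sin s=0$, i.e.\ to \eqref{lambda}, with root in the stated range and $\lambda\approx 2.97169$; the degenerate case $r\le 1$ correctly recovers Theorem \ref{quasiconv-k2}(1). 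This is an efficient way to \emph{discover} the constant, and it buys you the sharpness in the plane for free (failure of Kaplan's condition for $r>\lambda$ plus Lewandowski's converse). Two points remain to be nailed down, both of which you honestly flag: (a) the equivalence between Kaplan's analytic criterion and the paper's definition of a close-to-convex \emph{domain} (complement covered by pairwise disjoint half-lines) is the Kaplan--Lewandowski theorem and needs to be cited or proved for this univalent $\phi$; and (b) in the lift to $n>2$ you must actually exhibit the half-line cover of $\Rn\setminus\Ball{k}{y}{r}$ --- the revolution argument works only if the planar half-lines off the axis $\R y$ can be chosen inside the open half-planes bounded by the axis, with the axial complement handled separately by the two rays $\{ty: t\ge e^{r}\}$ and $\{ty: t\le e^{-r}\}$. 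Neither issue is a conceptual gap, but (b) in particular is where the published proof does real work, so as written your argument is a correct and well-motivated plan rather than a complete proof in higher dimensions.
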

\begin{proof}
  \cite[Theorem 1.2]{k3}.
\end{proof}

\subsection{Distance ratio metric}

We introduce some convexity properties of $j$-metric balls. The results are similar to the ones presented for the quasihyperbolic metric. However, now the results are valid for all subdomains of $\Rn$. The main idea in the proof of the results is to first prove the result in $\PS$ and then generalize it to general subdomains of $\Rn$. Generalization is fairly simple, because of the explicit formula for the metric is known. For more details about the proofs see the original papers.

\begin{theorem}\label{jradius1}
  For a domain $\Omega \subsetneq \Rn$ and $x \in \Omega$ the $j$-balls $B_j(x,r)$ are convex if $r \in (0,\log 2]$ and strictly starlike with respect to $x$ if $r \in \big( 0,\log (1+\sqrt 2) \big]$.
\end{theorem}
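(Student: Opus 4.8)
The plan is to fix $x\in\Omega$, write $D=d(x)$ and $t=e^{r}-1$, and reduce everything to elementary Euclidean geometry. Since $j_\Omega(x,y)<r$ is equivalent to $|x-y|<t\min\{d(x),d(y)\}$, and being below the minimum means being below both terms, I would record the reformulation
\[
B_j(x,r)=A\cap C,\qquad A=B^n(x,tD),\qquad C=\{y:|x-y|<t\,d(y)\}.
\]
Using $d(y)=\inf_{z\in\partial\Omega}|y-z|$ and the attainment of this infimum at a nearest boundary point, I would further write $C=\bigcap_{z\in\partial\Omega}\{y:|x-y|<t|y-z|\}$ as an intersection of Apollonius-type regions. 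The two thresholds of the theorem are exactly $t=1$ (that is $r=\log 2$) and $t=\sqrt2$ (that is $r=\log(1+\sqrt2)$), so the two claims are the borderline cases $t\le1$ and $t\le\sqrt2$.

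For the convexity claim ($t\le1$) the argument is then routine: for each fixed $z\in\partial\Omega$ the region $\{y:|x-y|<t|y-z|\}$ is an open half-space when $t=1$ and the interior of a Euclidean (Apollonius) ball when $t<1$, in either case convex. Hence $C$ is an intersection of convex sets, $A$ is convex, and $B_j(x,r)=A\cap C$ is convex.

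The starlikeness claim is the substantive part, and I expect the main obstacle to be precisely the range $1<t\le\sqrt2$: there each Apollonius region is the \emph{exterior} of a ball, which is neither convex nor starlike with respect to $x$, so the intersection argument collapses and the global structure of $d$ must be used. Here I would exploit the semiconcavity of the squared distance: since $p\mapsto|p|^2-d(p)^2=\sup_{z\in\partial\Omega}(2\langle p,z\rangle-|z|^2)$ is convex, along any ray $s\mapsto x+su$ the function $h(s):=\phi(s)^2-s^2$, with $\phi(s):=d(x+su)$, is concave and satisfies $h(0)=D^2$. The ray lies in $B_j(x,r)$ exactly when $s<tD$ and $P(s):=h(s)+(1-t^{-2})s^2>0$, so strict starlikeness reduces to showing that $\{s\in[0,tD):P(s)>0\}$ is an interval $[0,s^{*})$.

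To finish I would argue by contradiction. A gap in that set would give $0<\beta<\gamma<tD$ with $P(\gamma)>0$ and $P(\beta)\le0$, while $P(0)=D^2>0$. Applying concavity of $h$ at $\beta$ between $0$ and $\gamma$, together with $h(\gamma)>-(1-t^{-2})\gamma^2$ and $h(\beta)\le-(1-t^{-2})\beta^2$, simplifies after cancelling the factor $\gamma-\beta>0$ to $(1-t^{-2})\beta\gamma>D^2$; but $\beta,\gamma<tD$ force $(1-t^{-2})\beta\gamma<(t^2-1)D^2\le D^2$ for $t\le\sqrt2$, a contradiction. Thus each half-line from $x$ meets $\partial B_j(x,r)$ exactly once, which is strict starlikeness. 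Finally, taking $\Omega=\PS$, where $h$ is affine, makes the two estimates above tight and shows that $t=\sqrt2$, equivalently $r=\log(1+\sqrt2)$, is the exact threshold at which ray re-entry first appears, so the constant is sharp.
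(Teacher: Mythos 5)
Your decomposition $B_j(x,r)=B^n(x,tD)\cap\bigcap_{z\in\partial\Omega}\{y:|x-y|<t|y-z|\}$ is correct, and the convexity claim for $t\le1$ follows exactly as you say, since each Apollonius region is an open half-space or ball. (One small omission: you should note that any $y$ satisfying both conditions automatically lies in $\Omega$ when $t\le 2$, because a segment from $x$ to a point outside $\overline{\Omega}$ has length at least $d(x)+d(y)\ge 2\min\{d(x),d(y)\}$; otherwise the identification of $B_j(x,r)$ with $A\cap C$ is not literal.) This part is in substance the route of the paper, which defers to \cite[Theorem 1.1]{k1}: there the result is first proved in a punctured space $\Rn\setminus\{z\}$ and then transferred to general domains by intersecting over $z\in\partial\Omega$. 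Your starlikeness argument is genuinely different: instead of the explicit punctured-space computation plus a lemma on intersections of strictly starlike sets, you use concavity of $h(s)=d(x+su)^2-s^2$ (semiconcavity of $d^2$) to handle all domains at once. I checked the three-point estimate: it does reduce to $(1-t^{-2})\beta\gamma>D^2$ against $(1-t^{-2})\beta\gamma<(t^2-1)D^2\le D^2$, and it covers $t\le 1$ as well; this is a clean, unified explanation of the constant $\sqrt2$.

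There is, however, a gap in your final step. What you have proved is that each ray from $x$ meets $B_j(x,r)$ in a half-open initial segment $[0,s^*(u))$. That gives starlikeness, but it is not the same as the paper's definition of \emph{strict} starlikeness, namely that each half-line from $x$ meets $\partial B_j(x,r)$ at exactly one point: a bounded open set meeting every ray in an initial segment can still have a ray whose intersection with the boundary is a whole segment (the radial function need only be lower semicontinuous for openness). To close this you must exclude points $x+su$ with $s>s^*(u)$ lying in $\overline{B_j(x,r)}$. Such a point forces $j(x,x+su)=r$, hence (by the non-strict version of your three-point argument) $P\equiv 0$ on a nondegenerate interval $[s_0,s_1]\subset(0,tD]$ with $s_0<tD$. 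This can be ruled out by a tangent-line variant of your own estimate: on that interval $h$ coincides with $q(s)=-(1-t^{-2})s^2$, so $q'(s_0)$ is a supergradient of the concave function $h$ at $s_0$, whence $D^2=h(0)\le h(s_0)-s_0q'(s_0)=(1-t^{-2})s_0^2<(t^2-1)D^2\le D^2$, a contradiction for $t\le\sqrt2$. With that supplement the strict starlikeness follows; as written, the proof only establishes starlikeness. The closing sharpness remark is not needed for the stated theorem and is too sketchy to count as a proof of sharpness.
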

\begin{proof}
  \cite[Theorem 1.1]{k1}.
\end{proof}

\begin{theorem}\label{jradius2}
  Let $r > 0$, $\Omega \subsetneq \Rn$ be a convex domain and $x \in \Omega$. Then $j$-balls $B_j(x,r)$ are convex.
\end{theorem}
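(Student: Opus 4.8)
The plan is to linearize the defining inequality of the $j$-ball and then to reduce the whole statement to one concavity property of the boundary distance that is peculiar to convex domains. Writing $c = e^r - 1 > 0$ and exponentiating, the condition $j_\Omega(x,y) < r$ is equivalent to
\[
|x - y| < c \min\{ d(x), d(y)\},
\]
so that
\[
\Ball{j}{x}{r} = \{ y \in \Omega : |x-y| < c\,d(x) \} \cap \{ y \in \Omega : |x-y| < c\,d(y) \}.
\]
The first set on the right is simply the Euclidean ball $B^n(x, c\,d(x))$ (recall $d(x)$ is fixed once the center $x$ is chosen), which is convex. Since an intersection of convex sets is convex, it remains only to prove that the second set, call it $A = \{ y \in \Omega : |x-y| < c\,d(y) \}$, is convex.

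The main step is to show that the boundary distance $y \mapsto d(y) = d(y,\partial\Omega)$ is concave on a convex domain $\Omega$. I would argue as follows. Write $\Omega$ as the intersection of all closed half-spaces $H$ that contain it; each such $H$ has the form $\{ y : \langle a_H, y\rangle \le b_H\}$ with $|a_H| = 1$, and for $y \in \Omega$ the distance from $y$ to the bounding hyperplane $\partial H$ is the affine function $b_H - \langle a_H, y \rangle$. Because $\Rn \setminus \Omega = \bigcup_H (\Rn \setminus H)$, the nearest point of the complement is attained on some $\partial H$, and one gets
\[
d(y) = \inf_H \big( b_H - \langle a_H, y\rangle \big),
\]
an infimum of affine functions, hence concave. (Equivalently, the nearest boundary point realizes a supporting hyperplane, so the infimum is attained there.) This is where convexity of $\Omega$ is used in an essential way, and it is the crux of the argument; everything else is formal.

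Granting concavity of $d$, the function $g(y) = |x - y| - c\,d(y)$ is convex on $\Omega$, being the sum of the convex map $y \mapsto |x-y|$ and the convex map $y \mapsto -c\,d(y)$ (here $c > 0$). Consequently its strict sublevel set $A = \{ y \in \Omega : g(y) < 0\}$ is convex. Intersecting $A$ with the Euclidean ball $B^n(x, c\,d(x))$ then shows that $\Ball{j}{x}{r}$ is convex, for every $r > 0$, completing the proof. Note that, in contrast to Theorem \ref{jradius1}, no restriction on $r$ is needed, precisely because the concavity of $d$ holds globally on a convex domain.
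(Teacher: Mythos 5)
Your proof is correct. The paper itself offers no argument for this theorem, only the citation \cite[Theorem 4.5]{k1}, and the route taken there is different (though cousin to yours): one writes the convex domain as an intersection of half-spaces, notes that $d_{G_1\cap G_2}=\min\{d_{G_1},d_{G_2}\}$ forces $j_{G_1\cap G_2}=\max\{j_{G_1},j_{G_2}\}$ and hence $B_{j_{G_1\cap G_2}}(x,r)=B_{j_{G_1}}(x,r)\cap B_{j_{G_2}}(x,r)$, and then checks by explicit computation that $j$-balls in a half-space are convex for every $r$. You replace that computation by the single structural fact that $y\mapsto d(y,\partial\Omega)$ is concave on a convex domain, after which everything is soft: the exponentiated condition splits into the set $\{y\in\Omega:|x-y|<c\,d(x)\}=B^n(x,c\,d(x))\cap\Omega$ (your phrasing omits the harmless intersection with $\Omega$, which is relevant when $c>1$ but costs nothing since $\Omega$ is convex) and the sublevel set of the convex function $y\mapsto|x-y|-c\,d(y)$. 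Your justification of concavity is sound; note only that the intersection of all closed half-spaces containing $\Omega$ is $\overline{\Omega}$ rather than $\Omega$, which does not affect the distance identity, and that an even quicker argument is available: $B^n(y_1,d(y_1))\subset\Omega$ and $B^n(y_2,d(y_2))\subset\Omega$ imply $B^n\bigl(ty_1+(1-t)y_2,\,t\,d(y_1)+(1-t)\,d(y_2)\bigr)\subset\Omega$ by convexity of $\Omega$. What your approach buys is a coordinate-free proof that transfers verbatim to convex domains in arbitrary Banach spaces (compare Theorem \ref{thm: cd}), since concavity of the boundary distance and convexity of the norm require nothing Euclidean; what the half-space decomposition buys is additional information about the shape of the ball as an explicit intersection.
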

\begin{proof}
  \cite[Theorem 4.5]{k1}.
\end{proof}

Results of Theorems \ref{jradius1} and \ref{jradius2} are illustrated in Figure \ref{jballs}.

\begin{figure}[htp]
  \begin{center}
    \includegraphics[width=5cm]{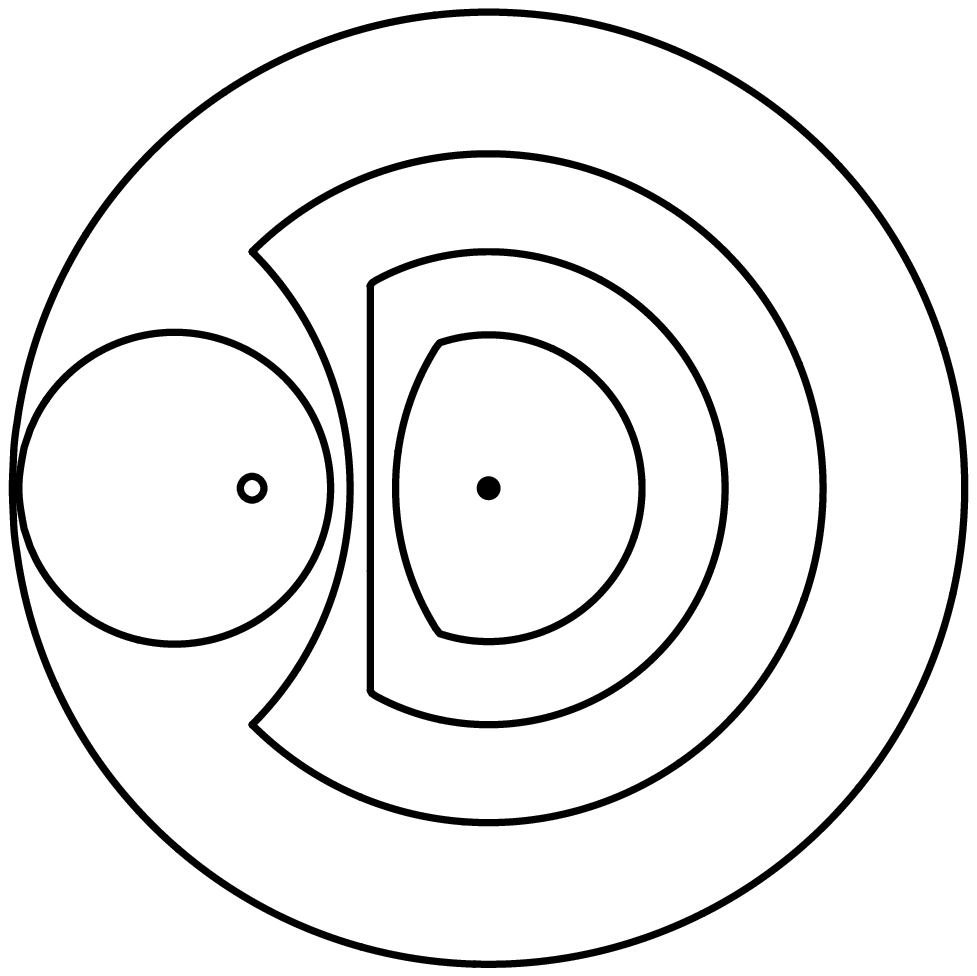}\hspace{1cm}
    \includegraphics[width=5cm]{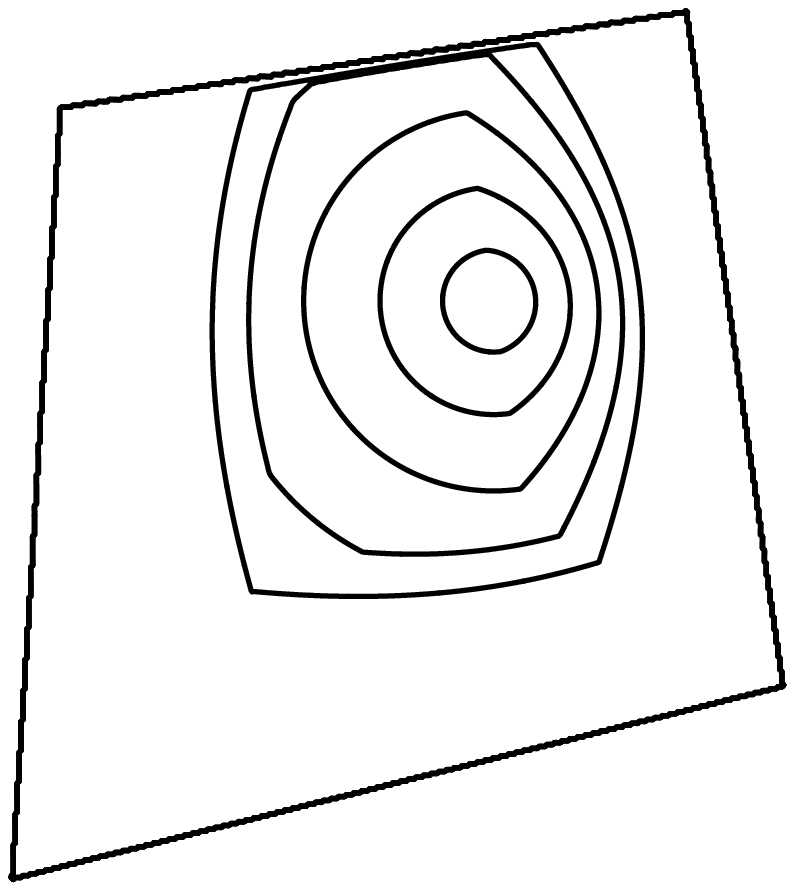}
    \caption{Left: boundaries of $j$-disks $j(1,r)$ in punctured plane $\Omega = \PP$ with $r=01/2$, $r=\log 2$, $r=\log(1+\sqrt{2})$ and $r=1.1 \approx \log 3$. Right: boundaries of $j$-disks in a convex polygonal domain.\label{jballs}}
  \end{center}
\end{figure}

\begin{theorem}
  Let $r > 0$ and $\Omega \subsetneq \Rn$ be a starlike domain with respect to $x \in \Omega$. Then the $j$-balls $B_j(x,r)$ are starlike with respect to $x$.
\end{theorem}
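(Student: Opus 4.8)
The plan is to verify the defining property of starlikeness directly: fixing $y \in B_j(x,r)$ and a parameter $t \in [0,1]$, I must show that the point $z = x + t(y-x)$ again lies in $B_j(x,r)$. Since $\Omega$ is starlike with respect to $x$, the segment $[x,y]$ is contained in $\Omega$, so $z \in \Omega$ and $j_\Omega(x,z)$ is defined; it then suffices to establish the monotonicity estimate $j_\Omega(x,z) \le j_\Omega(x,y)$. Because $|x-z| = t\,|x-y|$ and $\log$ is increasing, unwinding the definition of the $j$-metric reduces the entire statement to the single scalar inequality
\[
\min\{ d(x), d(z) \} \ge t\,\min\{ d(x), d(y) \}.
\]

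The key step, which I expect to be the crux, is the lower bound $d(z) \ge t\,d(y)$. For this I would use the \emph{homothety property} of starlike domains. The dilation $T_t(p) = x + t(p-x)$ centered at $x$ sends each $p \in \Omega$ to a point of the segment $[x,p] \subset \Omega$, so $T_t(\Omega) \subseteq \Omega$ for every $t \in [0,1]$. Since $T_t$ is an affine contraction by the factor $t$, it carries the inscribed Euclidean ball $B^n(y,d(y)) \subseteq \Omega$ onto $B^n(z, t\,d(y))$, which therefore also lies in $\Omega$. Reading off the distance to the boundary yields $d(z) \ge t\,d(y)$.

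Writing $m = \min\{d(x),d(y)\}$, I then combine two elementary bounds: $d(x) \ge m \ge tm$ (as $t \le 1$) and $d(z) \ge t\,d(y) \ge tm$. Both $d(x)$ and $d(z)$ dominate $tm$, hence $\min\{d(x),d(z)\} \ge tm$, which is exactly the scalar inequality required above. Substituting back,
\[
j_\Omega(x,z) = \log\!\left(1 + \frac{t\,|x-y|}{\min\{d(x),d(z)\}}\right) \le \log\!\left(1 + \frac{|x-y|}{\min\{d(x),d(y)\}}\right) = j_\Omega(x,y) < r,
\]
so $z \in B_j(x,r)$. As $t \in [0,1]$ was arbitrary, the whole segment $[x,y]$ lies in $B_j(x,r)$, proving that $B_j(x,r)$ is starlike with respect to $x$.

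The only genuine geometric input is the homothety inclusion $T_t(\Omega) \subseteq \Omega$; everything else is the monotonicity of $\log$ and the homogeneity of the dilation. I would therefore isolate that inclusion first. It is worth noting that, in contrast to the convex case (Theorem \ref{jradius2}), where one exploits concavity of the full distance function along segments, starlikeness supplies only the one-sided scaling estimate $d(z) \ge t\,d(y)$; this is nonetheless exactly strong enough, because the factor $t$ appearing in the numerator through $|x-z| = t\,|x-y|$ cancels it precisely.
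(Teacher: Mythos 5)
Your proof is correct, and it is essentially the standard argument: the paper itself gives no proof of this statement, deferring entirely to \cite[Theorem 4.8]{k1}, where the key step is the same scaling estimate $d(z)\ge t\,d(y)$ for $z=x+t(y-x)$ obtained from starlikeness (the homothety centered at $x$ maps $\Omega$ into itself, hence carries the inscribed ball $B^n(y,d(y))$ to $B^n(z,t\,d(y))\subset\Omega$). Combined with $|x-z|=t|x-y|$ and $\min\{d(x),d(z)\}\ge t\min\{d(x),d(y)\}$, this gives $j_\Omega(x,z)\le j_\Omega(x,y)<r$ exactly as you argue, so there is nothing to add.
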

\begin{proof}
  \cite[Theorem 4.8]{k1}.
\end{proof}

\begin{theorem}\label{connectedjballs}
  For a domain $\Omega \subsetneq \Rn$ and $x \in \Omega$ the $j$-metric ball $\Ball{j}{x}{r}$ is close-to-convex and connected, if $r \in (0, \log(1+\sqrt{3})]$. Moreover, the constant $\log(1+\sqrt{3})$ is sharp in the case $n = 2$.
\end{theorem}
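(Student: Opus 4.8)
The plan is to reduce everything to the punctured space $\PS$, where $d(y)=|y|$, and then transfer to a general $\Omega$ using the explicit formula for $j$. First I would exploit the scaling and rotational invariance of $j_{\PS}$ to normalize $|x|=1$, say $x=e_1$, and set $t=e^r-1$, so that the hypothesis $r\le\log(1+\sqrt{3})$ becomes $t\le\sqrt{3}$. Writing $j(x,y)<r$ as $|x-y|<t\min\{1,|y|\}$ and splitting according to whether $|y|\ge 1$ or $|y|<1$, I would show that $\Ball{j}{x}{r}$ is bounded by two spherical arcs: an outer arc on $S^{n-1}(x,t)$ (from the region $|y|\ge 1$) and an inner arc on the Apollonius-type sphere $C_t$ with centre $-\tfrac{1}{t^2-1}e_1$ and radius $\tfrac{t}{t^2-1}$ (from the region $|y|<1$, where the ball is the \emph{exterior} of $C_t$). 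The two arcs meet on the unit sphere at the points with first coordinate $1-t^2/2$; for $t=\sqrt{3}$ these are exactly $(-\tfrac12,\pm\tfrac{\sqrt{3}}{2})$ in the plane.

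Connectivity is then immediate and in fact holds on the whole claimed range: on the unit sphere the two defining inequalities coincide and cut out the cap $x\cdot y>1-t^2/2$, which is nonempty for every $t<2$, so the inner and outer parts are glued along a common relatively open arc, and each part is itself connected. For close-to-convexity I would prove that, when $t\le\sqrt{3}$, the ball is convex in the $e_1$-direction, i.e. every line parallel to $e_1$ meets $\Ball{j}{x}{r}$ in a single segment. Granting this, the complement is covered by the half-lines parallel to $e_1$ emanating from each side of the ball (pointing in $-e_1$ below the segment and in $+e_1$ above it); these lie at distinct heights, or are disjoint on a common line, hence are non-intersecting, which gives close-to-convexity. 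The substance is the $e_1$-convexity claim, whose breakdown is governed by the corner configuration above: at $t=\sqrt{3}$ the inner arc has a horizontal tangent at the corner, which is exactly the borderline at which a horizontal line can first fail to meet the ball in an interval.

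Sharpness in the plane is the crux. For $t>\sqrt{3}$ I would exhibit, just above the corner height, a horizontal line meeting $\Ball{j}{x}{r}$ in two disjoint segments: the sphere $C_t$ still cuts across while the outer lobe has already wrapped around the corner, producing a small ball ``horn'' near the corner separated from the main lobe by a notch of complement. The difficulty is not finding an escape ray for a single complement point (these always exist), but showing that no family of \emph{pairwise non-intersecting} half-lines can cover the entire complement: the horn is a protrusion of the ball, and together with its mirror image across the $e_1$-axis it forces the covering half-lines of the two opposing notches to collide. I expect this global non-existence argument to be the main obstacle; I would carry it out by tracking the admissible escape directions along a notch and deriving a contradiction from the non-crossing requirement, which also pins the threshold at exactly $t=\sqrt{3}$.

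Finally I would pass from $\PS$ to an arbitrary proper subdomain $\Omega\subsetneq\Rn$. Fixing $x$ and a nearest boundary point $z_0\in\partial\Omega$ with $|x-z_0|=d(x)$, the region $\{d(y)\ge d(x)\}$ again contributes the Euclidean ball $B^n(x,t\,d(x))$, while on $\{d(y)<d(x)\}$ the $1$-Lipschitz bounds $d(x)-|x-y|\le d(y)\le|y-z_0|$ sandwich the second part between the corresponding punctured-space models centred at $z_0$. Since the $j$-metric is given explicitly in \emph{every} domain, the same directional argument (now in the direction $x-z_0$) applies, and the extremal, least convex situation is exactly the punctured space; hence the positive assertion holds for all $\Omega$ with the same constant, while the planar sharpness example already lives in $\PP$. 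I regard this last reduction as routine given the explicit formula, the only care being the comparison of the distance-function sublevel sets near $z_0$.
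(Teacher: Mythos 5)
The paper itself does not prove this theorem; it only cites \cite[Theorem 1.2]{k3} (see also \cite{k4}), so your proposal has to stand on its own. Your description of $B_j(x,r)$ in $\PS$ is correct: with $|x|=1$ and $t=e^r-1>1$ the ball is bounded by an arc of $S^{n-1}(x,t)$ and an arc of the Apollonius sphere $C_t$ with centre $-\tfrac{1}{t^2-1}e_1$ and radius $\tfrac{t}{t^2-1}$, the two meeting on $S^{n-1}(0,1)$ where $x\cdot y=1-t^2/2$, and the identity $1-t^2/2=-\tfrac{1}{t^2-1}\iff t^2=3$ does pinpoint the threshold (for $t\le\sqrt3$ the ``pole'' of $C_t$ orthogonal to $e_1$ lies outside the closed unit ball, so every horizontal line meets the ball in one interval; for $t>\sqrt3$ it lies inside and some horizontal slices of the complement acquire a bounded middle component). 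The positive assertion in $\PS$, and connectedness there, are therefore completable along your lines. But two essential parts of the theorem are left as declarations. First, sharpness: you correctly observe that every complement point still has an escape ray when $t>\sqrt3$ (aim at a corner, using convexity of the lens $\overline{B}(C_t)\cap\overline{B^n}(0,1)$), so non-close-to-convexity requires a genuinely global argument that \emph{no} pairwise non-intersecting family of half-lines covers the complement. You name this as ``the main obstacle'' and do not supply it; the standard way to close it is a Kaplan-type criterion — the arc of $C_t$ inside the unit ball turns backwards (with $B_j$ on the left) by more than $\pi$ exactly when $t>\sqrt3$, which obstructs any such covering — but that lemma has to be stated and proved, and it is the heart of the matter. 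Moreover the sharpness of the \emph{connectedness} claim is not addressed at all: in $\PP$ the $j$-disk is connected for every $r$ (the two lobes always share the near cap of $S^1(0,1)$), so one needs a different domain, e.g.\ $\R^2\setminus\{e_1,-e_1\}$ with centre $\sqrt3\,e_2$ as in \cite[Remark 2.10]{k4}, which your argument never produces.

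Second, the passage to a general $\Omega\subsetneq\Rn$ is not routine, and the ``sandwiching'' you describe does not prove it. Close-to-convexity is not monotone under inclusion and is not preserved by intersections, and for general $\Omega$ the set $\{y:|x-y|<t\,d(y)\}$ is the intersection over all $z\in\partial\Omega$ of the exteriors of the Apollonius spheres based at $z$ — not a single punctured-space model perturbed by a Lipschitz estimate. What does transfer is an individual escape half-line: since $d(y)\le|y-z|$ gives $B_j^{\Omega}(x,r)\subseteq B_j^{\Rn\setminus\{z\}}(x,r)$ for every $z\in\partial\Omega$, a half-line avoiding the punctured-space ball for a nearest boundary point $z_y$ of $y$ also avoids $B_j^\Omega(x,r)$. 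But different complement points select different punctures $z_y$, hence half-lines from different directional families, and the pairwise non-intersection of the resulting global family — which is precisely what close-to-convexity demands — is exactly the point that needs an argument (likewise connectedness of the general ball needs its own proof, e.g.\ via the segment from $y$ to $z_y$ staying in the complement when $t\ge1$). As it stands, the proposal proves the punctured-space case modulo routine computation, but leaves both the sharpness and the general-domain reduction as genuine gaps.
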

\begin{proof}
  \cite[Theorem 1.2]{k3}.
\end{proof}

\subsection{Examples in subdomains of $\Rn$}

We consider some concrete examples of the quasihyperbolic metric and $j$-metric balls in $\Rn$.

\begin{example}\label{kinPP}
  Let us consider the quasihyperbolic distance in the punctured plane $\Omega=\PP$. By \cite{v2} quasihyperbolic geodesics between $x,y \in \Omega$ are unique whenever $k(x,y) \le 2$. Choosing $x \in \Omega$ and $y=-x$ it is by \eqref{MOformula} easy to see that $k(x,y) = \pi$. Since $|x|=|y|$ quasihyperbolic geodesic joining $x$ and $y$ is a subset of $S^1(0,|x|)$ and by selection of $x$ and $y$ it is clear that there are (at least) two different geodesics from $x$ to $y$.
\end{example}

While the metric space $(\Omega,k_\Omega)$, $\Omega \subset \Rn$, is always geodesic, the metric space $(\Omega,j_\Omega)$ is never geodesic \cite[Theorem 2.10]{k1}. This implies that quasihyperbolic balls are always connected and $j$-metric balls need not be connected.

\begin{example}
  An example of disconnected $j$-metric disks is shown in Figure \ref{disconnectedjdisks}. The domain is constructed from two disconnected Euclidean disks by joining then with a narrow corridor. In this domain it is possible to have a $j$-metric disk $B_j(x,r)$ such that $B_j(x,r)$ is connected and $\partial B_j(x,r)$ is disconnected. For more details see \cite[Remark 4.9]{k1}.
\end{example}

\begin{example}
  For any $N > 0$ it is possible to construct a domain such that there exists a $j$-metric ball with exactly $N$ components \cite[6.3]{k3}. It can be shown that the quasihyperbolic diameter of each of these components is bounded above by a constant depending only on dimension $n$ and radius $r$ \cite[Lemma 6.8]{k3}.
\end{example}

\begin{example}
  By Theorem \ref{connectedjballs} $j$-metric balls are always connected, if the radius is less than or equal to $\log (1+\sqrt{3})$. It is possible to find examples of $j$-metric balls such that they are disconnected for radius $\log (1+\sqrt{3}) + \varepsilon$ for all $\varepsilon > 0$. For example $B_j(\sqrt{3} e_2,r)$ in $\Omega=\R^2 \setminus \{ e_1,-e_1 \}$ is disconnected for all $r = \log (1+\sqrt{3}) + \varepsilon$, where $\varepsilon > 0$ is small enough \cite[Remark 2.10]{k4}.
\end{example}

\begin{figure}[htp]
  \begin{center}
    \includegraphics[width=7cm]{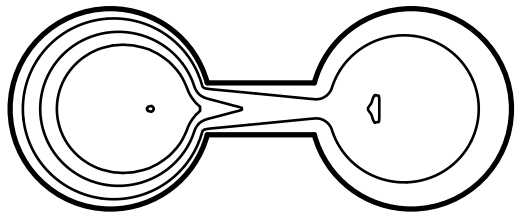}
     \caption{An example of disconnected $j$-disks.\label{disconnectedjdisks}}
  \end{center}
\end{figure}

\section{Quick tour on Banach spaces and their geometry}
\subsection{Basic notions}

Recall that a \emph{norm} on a (real or complex) vector space $V$ is a function $\rho\colon V\to [0,\infty)$ such that 
\[\rho(x+y)\leq \rho(x)+\rho(y),\quad x,y\in V\]
\[\rho(ax)=|a|\ \rho(x),\quad a\in \R,\ x\in V\]
\[\rho(x)=0\implies x=0,\quad x\in V.\]

Recall that a normed space $X=(X,\|\cdot\|)$ (where $X$ is a vector space and $\|\cdot\|$ is a norm) is said to be a Banach space if all of its Cauchy sequences converge. This means that for each Cauchy sequence, i.e. 
a sequence $(x_{n})\subset X$ such that $\limsup_{n,m\to \infty}\|x_{n}-x_{m}\|=0$ there is a (unique) point $x\in \X$ such that $\lim_{n\to\infty}\|x-x_{n}\|=0$.
Each finite-dimensional normed space, e.g. $(\R^{n}, | \cdot |)$, is a Banach space. Next we will give examples of the most simple Banach spaces. We denote by $\ell^{0}$ the vector space of all 
sequences of real numbers $(x_{1},x_{2},x_{3},\ldots)$ (this is not a normed space). Now, we will define linear subspaces of $\ell^{0}$ by taking all the sequences $(x_{n})$ 
such that $\|(x_{n})\|<\infty$, where the norm $\|\cdot\|$ varies according to the choice of the space. Thus we obtain the following classical spaces:
\[\ell^{\infty}=(\ell^{\infty},\|\cdot\|_{\infty}),\ \mathrm{where}\ \|(x_{n})\|_{\infty}=\sup_{n}|x_{n}|,\]
\[\ell^{p}=(\ell^{p},\|\cdot\|_{p}),\ 1\leq p<\infty,\ \mathrm{where}\ \|(x_{n})\|_{p}=\left(\sum_{n=1}^{\infty}|x_{n}|^{p}\right)^{\frac{1}{p}}.\] 

An example of a normed space, which is not a Banach space is the normed subspace of $\ell^{\infty}$ consisting of all finitely supported vectors, called $c_{00}$. This is not 
a Banach space because vectors of the form $(1,1/2,1/3,\ldots, 1/n,0,0,0,\ldots)$ ($n$ runs in $\N$) form a Cauchy sequence but the point of convergence\\ 
$(1,1/2,\ldots,1/n,1/n+1,\ldots)\in \ell^{\infty}$ is not in $c_{00}$. However, each normed space can be completed (essentially in a unique way) to be a Banach space. For example, the unique completion of $c_{00}$ is the Banach space
\[c_{0}=(c_{0},\|\cdot\|_{\infty}),\ \mathrm{where}\ c_{0}=\{(x_{n})\in \ell^{\infty}:\ \lim_{n\to\infty}|x_{n}|=0\}.\]

In a similar spirit, the Banach spaces of Lebesgue measurable functions\\ $f\colon [0,1]\to \R$ are defined: $L^{\infty}=(L^{\infty},\|\cdot\|_{L^{\infty}})$, where $\|f\|_{L^{\infty}}$ is the essential supremum of
\[|f|=\inf\{a>0:\ m(\{t\in [0,1]:\ |f(t)|>a\})=0\},\]
\[L^{p}=(L^{p},\|\cdot\|_{L^{p}}),\ 1\leq p<\infty,\ \mathrm{where}\ \|f\|_{L^{p}}=\left(\int_{0}^{1}|f(t)|^{p}\ dt\right)^{\frac{1}{p}}.\]

The spaces $\ell^{2}$ and $L^{2}$ in fact isometrically isomorphic. These \emph{Hilbert spaces} are infinite-dimensional generalizations of the Euclidean space and they have a central role in functional analysis and applications 
in many branches of mathematics. 

The Banach space of continuous functions $f\colon K\to \R$ with point-wise linear operations and the sup-norm $\|f\|_{\mathrm{sup}}=\sup_{t\in K}|f(t)|$ is denoted by $C(K)$, where $K$ is a compact Hausdorff space.

The Hardy spaces $H^{p}(\D)$ consist of holomorphic functions from the open unit disk $\D$ to the complex plane $\C$, where the norm is given by
\[\|f\|_{H^{p}}=\left(\frac{1}{2\pi}\int_{0}^{2\pi}\lim_{r\to 1}|f(re^{i\theta})|^{p}\ d\theta \right)^{\frac{1}{p}}.\]

\subsection{Duality, reflexive spaces}
The dual space of a real Banach space $X=(X,\|\cdot\|_{X})$ consists of all continuous linear mappings $F\colon X\to \R$. These mappings are called \emph{functionals}.
The dual of a Banach space $X$, denoted by $X^{\ast}$, is equipped with the norm $\|x^{\ast}\|_{\X^{\ast}}=\sup_{y\in X,\ \|y\|_{\X}=1}x^{\ast}[y]$ and it is also a Banach space. 
For example, it can be seen by using H\"older's inequality that for $1<p<\infty$ the dual space of $L^{p}$ is $L^{q}$ and the dual of $\ell^{p}$ is $\ell^{q}$, where $1<q<\infty$ satisfies $1/p + 1/q=1$.
The duality is in the sense that $f^{\ast}[g]=\int_{0}^{1}f^{\ast}(t)g(t)\ dt$ or $x^{\ast}[y]=\sum_{n=1}^{\infty}x^{\ast}(n)y(n)$, respectively.

The coarsest topology on $X$, that makes all the dual functionals continuous, is called the \emph{weak topology}. 
Recall that a topological space is \emph{compact} if each its open cover admits a finite subcover. For example, the closed balls of $\R^n$ are compact.
The closed unit ball 
\[B_{X}=\{x\in X:\ \|x\|\leq 1\}\]
is never compact in the topology given by the norm, if $X$ is infinite-dimensional. However, it might happen that $B_{X}$ is compact in the weak topology. In such a case $X$ is said to be \emph{reflexive}.
This happens exactly when $X$ is isometric in a canonical way to its second dual $X^{\ast\ast}=(X^{\ast})^{\ast}$. For example, the dual of $c_{0}$ is $\ell^{1}$, whose dual in turn is $\ell^{\infty}$. For this reason 
$c_{0}$ is not reflexive, since $c_{0}$ is not isometric to $\ell^{\infty}$ (in any way). Compactness of the unit ball in weak topology is very useful property and it has many applications in several branches of mathematics, e.g. 
for proving the existence of quasihyperbolic geodesics on convex subsets of a reflexive Banach space (see e.g. \cite{Vaisala05}). We will give subsequently some geometric conditions ensuring that a space is reflexive.

\subsection{The Radon-Nikodym principle, differentiation and integration in Banach spaces}

In calculations regarding paths and the quasihyperbolic metric it is often very convenient to write a path as an integral of its derivative in some sense. However, in a general Banach space this is not always 
possible, and even if it is, some extra care is needed in doing calculus. We require a notion of derivative for mappings on Banach spaces as well as a Banach-valued integral. The fundamental theorem of calculus in the form of 
the Radon-Nikodym theorem does not hold in all Banach spaces (e.g. not in $c_{0}$) and in principle one is required to study Banach-valued measures in order to decide which space satisfy the Radon-Nikodym theorem.
The good news is that there is a wide class of Banach spaces, namely ones with the so-called \emph{Radon-Nikodym Property} (RNP), in which one can apply the fundamental theorem of calculus principle.
 
Let us begin by defining the two most important derivatives of a function on a Banach space.  
Suppose that $f\colon \X\to \Y$ is a continuous mapping between Banach spaces $\X$ and $\Y$. The functional analytic differentiation is a a straight forward generalization of the usual one in $\R^n$.
Namely, we say that $f$ is \emph{G\^ateaux differentiable} at $x_{0}\in \X$ if there is a linear continuous mapping $f^{\prime}(x_{0})\colon \X\to \Y$ given by
\[f^{\prime}(x_{0})(z)=\lim_{t\to 0}\frac{f(x_{0}+tz)-f(x_{0})}{t}\]
for $z\in \X$. If the G\^ateaux derivative $f^{\prime}(x_{0})\colon \X\to \Y$ exists at point $x_{0}$ and 
\[\lim_{r\to 0^{+}}\sup_{h\in \X,\ \|h\|=r}\frac{f^{\prime}(x_{0})(h)-(f(x_{0}+h)-f(x_{0}))}{\|h\|}=0,\]
then $f$ is said to be \emph{Fr\'echet differentiable} at $x_{0}$. In $\R^n$ these concepts of differentiability coincide due to compactness of closed balls.
 
A Banach-valued function $f\colon [0,1]\to \X$ is said to be \emph{measurable} if there is a sequence $(f_{n})$ of countably valued functions of the 
form 
\[f_{n}\colon [0,1]\to \X,\ f_{n}(t)=\sum_{i=1}^{\infty}x_{i,n}\chi_{A_{i,n}}(t),\quad \mathrm{with}\ A_{i,n}\subset [0,1]\ \mathrm{measurable},\] 
such that $f_{n}(t)\to f$ as $n\to \infty$ for almost every $t\in [0,1]$.

There is a natural way to define an integral $\int_{0}^{1}f(t)\ dt$ of a measurable function $f$, called the \emph{Bochner integral}:
\[\int_{0}^{1}f(t)\ dt=\lim_{n\to\infty}\sum_{i=1}^{\infty} m(A_{i,n})x_{i,n},\]
where the limit is taken in the norm, if 
\[\lim_{n\to\infty}\int_{0}^{1}\|f(t)-f_{n}(t)\|\ dt=0.\]
This definition is well set as it does not depend on the selection of the sequence $(f_{n})$.

A Banach space $\X$ has the RNP if and only if each Lipschitz path $\gamma\colon [0,1]\to \X$ is G\^ateaux differentiable almost everywhere. In such a case one can represent 
such a path $\gamma$ as a Bochner integral of its G\^ateaux derivative by the formula 
\[\gamma(s)=\gamma(0)+\int_{0}^{s}\gamma^{\prime}(t)\ dt,\]
where $\int_{0}^{s}f(t)\ dt=\int_{0}^{1}\chi_{[0,s]}(t)f(t)\ dt$.
 
Reflexive spaces and separable dual spaces have the RNP. The RNP is a kind of convexity property, which does not allow too much flatness on the unit sphere. 
Given two Banach space properties $P$ and $Q$, we say that $P$ is a \emph{dual property} of $Q$ if $P$ of $\X^{\ast}$ implies $Q$ of $\X$. For example, RNP is a dual property for another property of Banach spaces,
called Asplund. A Banach space is Asplund if each of its separable subspaces has separable dual. As noted, the RNP is a kind of convexity property, whereas Asplund is a kind of smoothness property, which yields that 
the norm is a Fr\'echet differentiable function in a dense set of points.  
 
\subsection{Uniform convexity and smoothness of the unit ball}
The unit ball of $(\R^{2},\|\cdot\|_{\infty})$ is a non-typical one, it is a square, and thus it barely deserves to be called a ball. Two things to observe are that it has flat sides and sharp corners.
This is to say that the space is not \emph{strictly convex} and not \emph{smooth}. In order to bypass some pathological behaviour of Banach spaces, one would like to ensure convexity and smoothness of the unit ball, 
not just looking at some particular points (like corners above) but instead the whole asymptotics of the ball. This is in particular necessary when one desires to form estimates involving smoothness of the norm 
non-specific to a certain location of the unit ball.

Next, we will present two moduli related to the geometry of Banach spaces. The \emph{modulus of convexity} $\delta_{\X}(\epsilon),\ 0<\epsilon \leq 2,$ is defined by
\[\delta_{\X}(\epsilon):=\inf\{1-\|x+y\|/2:\ x,y\in \X,\ \|x\|=\|y\|=1,\ \|x-y\|=\epsilon\},\]
and the \emph{modulus of smoothness} $\rho_{\X}(\tau),\ t>0$ is defined by 
\[\rho_{\X}(\tau):=\sup\{(\|x+y\|+\|x-y\|)/2 -1,\ x,y\in \X,\ \|x\|=1,\ \|y\|=\tau\}.\]
The Banach space $\X$ is called \emph{uniformly convex} if $\delta_{\X}(\epsilon)>0$ for all $\epsilon>0$, and \emph{uniformly smooth}  if 
\[
\lim_{\tau\to 0^{+}}\frac{\rho_{\X}(\tau)}{\tau}=0.
\]
The modulus of convexity measures the maximum distance to the boundary form a point of a cord of length $\epsilon$ inside the unit ball. Thus the unit ball of a uniformly convex space is round (meaning non-flat) 
in all directions and this roundness is controlled by the modulus $\delta$. In getting acquainted with the notion of uniform smoothness, it is perhaps worthwhile to calculate the modulus of smoothness for $\ell^{1}$, 
which is the function $\tau \mapsto \tau$.

Moreover, a space $\X$ is uniformly convex (resp. uniformly smooth) of power type $p\in [1,\infty)$ if $\delta_{\X}(\epsilon)\geq K\epsilon^{p}$ (resp. $\rho_{\X}(\tau)\leq K\tau^p$) for some $K>0$.
The power type $p$ relates the smoothness/convexity of the space $\X$ to that of the spaces $L^{p}$. The power types $q$ of smoothness and $p$ of convexity satisfy 
$q\leq 2\leq p$. In fact, Hilbert spaces (corresponding to the case $q=p=2$) are both uniformly convex and uniformly smooth and moreover have \emph{the} best possible modulus functions of convexity and smoothness. 
Spaces linearly homeomorphic to a uniformly smooth or a uniformly convex space are (super)reflexive.

An example of a Banach space, which is reflexive, strictly convex and smooth, but not superreflexive, is 
\[\ell^{2}\oplus_{2}\ell^{3}\oplus_{2}\ell^{4}\oplus_{2}\ldots.
\]

\section{Metric balls on Banach spaces}

Some previous work will be treated here (\cite{Martin85}, \cite{MartioVaisala11}, \cite{Vaisala90}, \cite{Vaisala91}, \cite{Vaisala92}, \cite{Vaisala98}, \cite{Vaisala99}, \cite{Vaisala05}).

Next we will summarize the main results in \cite{RasilaTalponen}. First, in the $j$-metric, balls are starlike for radii $r\le \log 2$:

\begin{theorem}\label{thm: sl}
Let $\X$ be a Banach space, $\Omega\subsetneq\X$ a domain, and let $j$ be the distance ratio metric on $\Omega$. Then each $j$-ball  $\B_{j}(x_{0},r),\ x_{0}\in \Omega,$ is starlike for radii $r\leq \log 2$.
\end{theorem}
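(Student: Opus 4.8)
The plan is to fix the centre $x_{0}$ and show that $B_{j}(x_{0},r)$ is starlike with respect to $x_{0}$ by proving that, for every $y$ in the ball and every $t\in[0,1]$, the point $z_{t}=x_{0}+t(y-x_{0})$ again lies in the ball. First I would extract the arithmetic content of the hypothesis $r\le\log 2$: if $j(x_{0},y)<r\le\log 2$ then $1+\|x_{0}-y\|/\min\{d(x_{0}),d(y)\}<2$, so $\|x_{0}-y\|<\min\{d(x_{0}),d(y)\}$. In particular $\|x_{0}-y\|<d(x_{0})$, which places $y$ inside the open norm-ball of radius $d(x_{0})$ about $x_{0}$; since that ball is contained in $\Omega$ and is convex, the whole segment $[x_{0},y]$, and hence each $z_{t}$, lies in $\Omega$, so that $d(z_{t})>0$ and $j(x_{0},z_{t})$ is well defined.

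The heart of the argument is a lower bound for the denominator $\min\{d(x_{0}),d(z_{t})\}$ along the segment. Here I would use only that, on $\Omega$, the function $x\mapsto d(x)$ coincides with the distance to the closed complement $X\setminus\Omega$ and is therefore $1$-Lipschitz. Applying this at $y$ gives $d(z_{t})\ge d(y)-\|z_{t}-y\|=d(y)-(1-t)\|x_{0}-y\|$, and combining with $\|x_{0}-y\|<d(y)$ from the previous step yields $d(z_{t})\ge t\,d(y)\ge t\min\{d(x_{0}),d(y)\}$. Since also $d(x_{0})\ge t\min\{d(x_{0}),d(y)\}$, we obtain $\min\{d(x_{0}),d(z_{t})\}\ge t\min\{d(x_{0}),d(y)\}$.

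Substituting $\|x_{0}-z_{t}\|=t\|x_{0}-y\|$ into the defining formula, the factor $t$ then cancels, so that for $t\in(0,1]$ (the case $t=0$ being trivial)
\[
j(x_{0},z_{t})=\log\Big(1+\frac{t\|x_{0}-y\|}{\min\{d(x_{0}),d(z_{t})\}}\Big)\le\log\Big(1+\frac{\|x_{0}-y\|}{\min\{d(x_{0}),d(y)\}}\Big)=j(x_{0},y)<r,
\]
whence $z_{t}\in B_{j}(x_{0},r)$ and the ball is starlike with respect to $x_{0}$.

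I expect the main obstacle to be the control of $d(z_{t})$. In a convex domain $d$ is concave along segments, which would at once give $d(z_{t})\ge(1-t)d(x_{0})+t\,d(y)$; but for a general domain $\Omega\subsetneq X$ no such concavity is available, and one must make do with the one-sided Lipschitz estimate. The role of the threshold $r\le\log 2$ is precisely that it forces $\|x_{0}-y\|<d(y)$, which is exactly the inequality needed to upgrade the crude Lipschitz bound into the clean estimate $d(z_{t})\ge t\,d(y)$; for larger radii this last step fails, and indeed starlikeness can genuinely break down, so no slack is being wasted here.
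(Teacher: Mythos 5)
Your proof is correct, and it is essentially the standard argument: the paper itself gives no proof here but defers to \cite{RasilaTalponen}, where the same reasoning is used --- the condition $r\le\log 2$ yields $\|x_{0}-y\|<\min\{d(x_{0}),d(y)\}$, the $1$-Lipschitz property of $x\mapsto d(x)$ gives $d(z_{t})\ge t\,d(y)$ along the segment, and the factor $t$ cancels in the defining formula. Nothing is missing; your remark that the threshold $\log 2$ is exactly what makes the Lipschitz bound close is also the right way to see why the constant appears.
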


The following theorem is a generalization of a result of Martio and V\"ais\"al\"a \cite[2.13]{MartioVaisala11}:

\begin{theorem}\label{thm: cd} 
Let $\X$ be a Banach space and $\Omega\subsetneq\X$ a convex domain. Then all quasihyperbolic balls and $j$-balls on $\Omega$ are convex. Moreover, if $\Omega$ is uniformly convex, or if $\X$ is strictly convex and has the RNP, then
these balls are strictly convex.
\end{theorem}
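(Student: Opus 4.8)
The plan is to reduce both statements to a single geometric fact: on a convex domain the distance function $d(x)=\operatorname{dist}(x,\partial\Omega)$ is concave. I would prove this by separating, writing the open convex set as an intersection $\Omega=\bigcap_\alpha\{x\in\X:\ f_\alpha(x)<c_\alpha\}$ of open half-spaces with $\|f_\alpha\|=1$, noting that the distance to the bounding hyperplane is the affine function $x\mapsto c_\alpha-f_\alpha(x)$, and observing that $d=\inf_\alpha(c_\alpha-f_\alpha)$ is an infimum of affine functions, hence concave. If $\Omega$ is uniformly convex I would upgrade this to a quantitative strict concavity, i.e. $d(y_t)>(1-t)d(y_1)+td(y_2)$ for $y_1\neq y_2$, with a gap controlled by $\|y_1-y_2\|$ and the modulus of convexity of $\Omega$.

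For the $j$-balls this fact is almost all that is needed. Writing $M=e^{r}-1>0$, one has $y\in\B_j(x_0,r)$ exactly when $\|y-x_0\|<M\min\{d(x_0),d(y)\}$. Given $y_1,y_2$ in the ball, $t\in(0,1)$, and $m_i=\min\{d(x_0),d(y_i)\}$, the triangle inequality gives $\|y_t-x_0\|\le(1-t)\|y_1-x_0\|+t\|y_2-x_0\|<M\bigl((1-t)m_1+tm_2\bigr)$, while $m_i\le d(x_0)$ and $m_i\le d(y_i)$ together with concavity of $d$ give $(1-t)m_1+tm_2\le\min\{d(x_0),d(y_t)\}$. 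Chaining these yields $y_t\in\B_j(x_0,r)$, so the ball is convex; this argument uses neither completeness nor the RNP.

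The quasihyperbolic balls are the crux. I would join $x_0$ to $y_1,y_2$ by (near-)geodesics $\gamma_1,\gamma_2\colon[0,1]\to\Omega$, reparametrised to constant quasihyperbolic speed so that $\|\gamma_i'\|=L_i\,d(\gamma_i)$ with $L_i=\ell_k(\gamma_i)$, and test convexity with the linear interpolant $\gamma_t=(1-t)\gamma_1+t\gamma_2$, which runs from $x_0$ to $y_t$. Combining $\|\gamma_t'\|\le(1-t)\|\gamma_1'\|+t\|\gamma_2'\|$ with $d(\gamma_t)\ge(1-t)d(\gamma_1)+td(\gamma_2)$ and substituting the constant-speed relation turns the integrand of $\ell_k(\gamma_t)$ into the weighted average $\frac{w_1L_1+w_2L_2}{w_1+w_2}$, where $w_1=(1-t)d(\gamma_1)$ and $w_2=td(\gamma_2)$; this is at most $\max\{L_1,L_2\}$ pointwise. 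Hence $k(x_0,y_t)\le\ell_k(\gamma_t)\le\max\{k(x_0,y_1),k(x_0,y_2)\}$, and the ball is convex. Where the path fails to be differentiable one reads $\ell_k$ off Riemann sums and the same estimate survives, so again no extra hypothesis is required for mere convexity.

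For strict convexity I would track where these inequalities become strict for distinct boundary points $y_1\neq y_2$. If $\X$ is strictly convex the triangle inequality (on increments of $\gamma_t$, or on the velocities $\gamma_i'$ in the quasihyperbolic case) is strict unless the relevant vectors are positively proportional; the proportional case means $y_1,y_2$ lie on a common ray, respectively geodesic, from $x_0$, which is excluded because $j_\Omega(x_0,\cdot)$ and $k_\Omega(x_0,\cdot)$ are strictly increasing outward --- for the ray this is the elementary fact that $s\mapsto s/\min\{d(x_0),d(x_0+su)\}$ is strictly increasing, via concavity of $d$. This is exactly where the RNP enters: it guarantees that the Lipschitz geodesics are G\^ateaux differentiable almost everywhere, so that the strict inequality on $\gamma_i'$ is meaningful and can be integrated. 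In the alternative regime, uniform convexity of $\Omega$, I would instead use the strict concavity of $d$: since $y_1\neq y_2$ forces $\gamma_1\not\equiv\gamma_2$, one has $d(\gamma_t)>(1-t)d(\gamma_1)+td(\gamma_2)$ on a set of positive measure, producing a strict integral inequality. The main obstacle is precisely the borderline case $L_1=L_2$, where the weighted-average bound gives no slack and strictness must be recovered from the norm term or the distance term; making this rigorous requires both the existence and almost-everywhere differentiability of the geodesics and control of the negligible set where $\gamma_1'$ is proportional to $\gamma_2'$ or where $\gamma_1=\gamma_2$, which is where the two sets of hypotheses do their work.
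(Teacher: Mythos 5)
The survey itself gives no proof of this theorem --- it is stated as a summary of \cite{RasilaTalponen} --- so your proposal can only be measured against the approach of that source. For the first assertion (plain convexity) your argument is essentially the right one and is complete in all essentials: concavity of $d(\cdot)=\mathrm{dist}(\cdot,\partial\Omega)$ as an infimum of the affine distances to supporting hyperplanes, the elementary chain of inequalities for the $j$-ball, and the averaging of constant--quasihyperbolic-speed near-geodesics giving $\ell_k(\gamma_t)\le\max\{L_1,L_2\}$ are exactly the devices used in \cite{MartioVaisala11} and \cite{RasilaTalponen} (the same averaging reappears in Theorem \ref{lm: QHlemma} of this survey). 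Your use of $\varepsilon$-near-geodesics correctly sidesteps the existence question, and the Riemann-sum remark adequately handles non-differentiable rectifiable paths.

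The gap is in the second assertion, and you have in effect flagged it yourself without closing it. Two concrete problems remain. First, under the hypothesis ``$\mathrm{X}$ strictly convex with RNP'' you have no existence theorem for geodesics (RNP does not imply reflexivity, so the weak-compactness argument of \cite{Vaisala05} is unavailable), hence you must work with paths satisfying only $\ell_k(\gamma_i)<k(x_0,y_i)+\varepsilon$. A merely qualitative strict inequality --- ``strict on a set of positive measure because the velocities are not proportional there'' --- then yields $\ell_k(\gamma_t)<\max_i\ell_k(\gamma_i)<r+\varepsilon$ with no gap uniform in $\varepsilon$, and you cannot conclude $k(x_0,y_t)<r$. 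A quantitative modulus is needed; the uniformly convex $\Omega$ branch supplies one (strict concavity of $d$ with a gap controlled by $\|y_1-y_2\|$ near the endpoints) and can be completed along your lines, but the strictly-convex-plus-RNP branch cannot be finished as sketched. Second, even granting geodesics, the degenerate case where $\gamma_1'$ and $\gamma_2'$ are positively proportional almost everywhere is not disposed of by your ``common ray'' reduction: for the $j$-metric the common ray is a Euclidean segment and the monotonicity of $s\mapsto s/\min\{d(x_0),d(x_0+su)\}$ finishes the argument, but a common quasihyperbolic geodesic is in general not a line segment, so $y_t=(1-t)y_1+ty_2$ need not lie on it, and monotonicity of $k(x_0,\cdot)$ along the geodesic says nothing about $k(x_0,y_t)$. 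Until these two points are resolved, the strict convexity statement is not proved.
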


If $\Omega$ is starlike with respect to $x_0\in \Omega$, then all quasihyperbolic and $j$-metric balls centered at $x_0$ are starlike as well.

\begin{theorem}\label{thm_starlike_starlike}
Let $\X$ be Banach space, $x_{0}\in \X$ and let $\Omega\subset \X$ be a domain which is starlike with respect to $x_{0}$. Then all balls $\B_{j}(x_{0},r)$ and $\B_{k}(x_{0},r)$ of $\Omega$ are starlike.
\end{theorem}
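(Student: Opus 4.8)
The plan is to reduce both assertions to a single scaling property of the boundary-distance function of a starlike domain, and then to exploit it by parallel (but slightly different) arguments for the two metrics. Throughout write $d(\cdot)$ for the distance to $\partial\Omega$, and introduce the radial contraction $\Phi_s\colon\X\to\X$, $\Phi_s(w)=x_0+s(w-x_0)$, for $s\in[0,1]$.

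\emph{The scaling lemma.} Starlikeness of $\Omega$ with respect to $x_0$ is exactly the statement that $\Phi_s(\Omega)\subseteq\Omega$ for every $s\in[0,1]$. Since $\Phi_s$ is an affine similarity of ratio $s$, i.e. $\|\Phi_s(a)-\Phi_s(b)\|=s\|a-b\|$, it carries the inscribed ball $B(w,d(w))$ onto $B(\Phi_s(w),s\,d(w))$, and the latter lies in $\Phi_s(\Omega)\subseteq\Omega$. This gives the scaling inequality
\[
d(\Phi_s(w))\ \ge\ s\,d(w),\qquad s\in[0,1],\ w\in\Omega,
\]
which is the whole engine of the argument. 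I expect no real obstacle here; it is routine once the similarity picture is in place.

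\emph{The $j$-ball.} Fix $y\in\B_j(x_0,r)$ and set $z_t=\Phi_t(y)=x_0+t(y-x_0)$, $t\in[0,1]$; these points lie in $\Omega$ by starlikeness. I would show $t\mapsto j_\Omega(x_0,z_t)$ is non-decreasing. Using $\|x_0-z_t\|=t\|x_0-y\|$ and the elementary identity $A/\min\{a,b\}=\max\{A/a,A/b\}$ for positive $A,a,b$, write
\[
\frac{\|x_0-z_t\|}{\min\{d(x_0),d(z_t)\}}=\max\left\{\frac{t\|x_0-y\|}{d(x_0)},\ \|x_0-y\|\,\frac{t}{d(z_t)}\right\}.
\]
The first entry is plainly increasing in $t$; for the second, apply the scaling inequality with $w=z_{t_2}$ and $s=t_1/t_2$ (so $\Phi_s(z_{t_2})=z_{t_1}$) to get $d(z_{t_1})/t_1\ge d(z_{t_2})/t_2$, i.e. $t/d(z_t)$ is non-decreasing. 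A maximum of non-decreasing functions is non-decreasing, and since $u\mapsto\log(1+u)$ is increasing we obtain $j_\Omega(x_0,z_t)\le j_\Omega(x_0,y)<r$. Hence $[x_0,y]\subset\B_j(x_0,r)$.

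\emph{The quasihyperbolic ball.} Here I argue with paths rather than a closed formula. Given any rectifiable $\gamma$ from $x_0$ to $y$ in $\Omega$, the image $\Phi_s\circ\gamma$ joins $\Phi_s(x_0)=x_0$ to $\Phi_s(y)=z_s$ and stays in $\Omega$. Reparametrizing the length integral over $\Phi_s\circ\gamma$ back to $\gamma$, the arc-length element picks up the similarity factor $s$ while the denominator is $d$ evaluated at $\Phi_s(z)$, so the scaling inequality makes the two factors of $s$ cancel:
\[
\ell_k(\Phi_s\circ\gamma)\ =\ \int_{\gamma}\frac{s\,\|dz\|}{d(\Phi_s(z))}\ \le\ \int_{\gamma}\frac{s\,\|dz\|}{s\,d(z)}\ =\ \ell_k(\gamma).
\]
Taking the infimum over all such $\gamma$ yields $k_\Omega(x_0,z_s)\le k_\Omega(x_0,y)<r$, so $[x_0,y]\subset\B_k(x_0,r)$ as well. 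Note that this uses only that $\Phi_s$ is a similarity of ratio $s$ together with the scaling inequality, hence no differentiability, no RNP, and no reflexivity is needed, consistent with the theorem being stated for arbitrary Banach spaces. The one point deserving care is making the arc-length scaling rigorous for a merely rectifiable path; but this is immediate since $\Phi_s$ is bi-Lipschitz onto its image with both constants equal to $s$, so the length measure of $\Phi_s\circ\gamma$ is exactly $s$ times that of $\gamma$. This is the closest thing to a technical obstacle, and it is minor.
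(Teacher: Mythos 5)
Your argument is correct; note that the paper itself gives no proof of Theorem \ref{thm_starlike_starlike} but only cites \cite{RasilaTalponen}, and the proof there is essentially the one you give: the radial contraction $\Phi_s(w)=x_0+s(w-x_0)$ maps $\Omega$ into itself by starlikeness, yields the key estimate $d(\Phi_s(w))\ge s\,d(w)$, and this is applied to the closed formula for $j$ and to contracted paths for $k$. The only point worth stating explicitly is the identification $d(w,\partial\Omega)=d(w,\X\setminus\Omega)$ for $w$ in the open set $\Omega$, which justifies pushing the inscribed ball forward under $\Phi_s$; otherwise nothing is missing.
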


Finally, we note that in \cite{k2} (see Theorem \ref{quasiconv-k2}) critical radii are provided for the convexity of quasihyperbolic and $j$-balls on punctured $\R^n$. Again, it is a natural question whether the existence of such radii can be established, in the Banach space setting

In order to check the convexity of a $k$-ball in a punctured space it would seem natural to exploit an averaging argument similar to the proof of Theorem \ref{thm: cd}. We have obtained the following partial result which comes very close to providing such a device:

\begin{theorem}\label{lm: QHlemma}
Let $\X$ be a Banach space, which is uniformly convex and uniformly smooth, both moduli being of power type $2$. We consider the quasihyperbolic metric $k$ on $\Omega=\X\setminus \{0\}$. Then there exists $R>0$ as follows.
Assume that $\gamma_{1},\gamma_{2}\colon [0,t_{2}]\to \Omega$ are rectifiable paths satisfying the following conditions:
\begin{enumerate}
\item[(i)]{$\gamma_{1},\gamma_{2}$ and $(\gamma_{1}+\gamma_{2})/2$ are contained in $\B_{\|\cdot \|}(0,2)\setminus \B_{\|\cdot \|}(0,1)$},
\item[(ii)]{$\gamma_{1}(0)=\gamma_{2}(0)$},
\item[(iii)]{$\ell_{k}(\gamma_{1})\vee \ell_{k}(\gamma_{2})\leq R$},
\item[(iv)]{$\ell_{\|\cdot\|}(\gamma_{1})=t_{1}\leq t_{2}=\ell_{\|\cdot\|}(\gamma_{2})$},
\item[(v)]{the paths are parameterized with respect to $\ell_{\|\cdot\|}$, except that\\ $\gamma_{1}(t)=\gamma_{1}(t_{1})$ for $t\in [t_{1},t_{2}]$.}
\end{enumerate}
Then the following estimate holds: 
\[
\frac{\ell_{k}(\gamma_{1})+\ell_{k}(\gamma_{2})}{2}+\int_{t_{1}}^{t_{2}}\frac{\|d\gamma_{2}\|}{2d(\gamma_{2})}
\geq \ell_{k}\left(\frac{\gamma_{1}+\gamma_{2}}{2}\right)+\int_{0}^{t_{1}}\frac{\delta_{\X}(\|D(\gamma_{1}-\gamma_{2})\|)}{\|\gamma_{1}\|+\|\gamma_{2}\|}\ ds.
\] 
\end{theorem}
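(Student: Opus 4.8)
The plan is to exploit the identity $d(z)=\|z\|$, valid on $\Omega=\X\setminus\{0\}$ because $\partial\Omega=\{0\}$, so that every quasihyperbolic length is $\int \|\gamma'\|/\|\gamma\|$. Since $\X$ is uniformly convex and uniformly smooth it is superreflexive and hence has the RNP, so $\gamma_{1},\gamma_{2}$ and $\mu:=(\gamma_1+\gamma_2)/2$ are differentiable a.e., with $\|\gamma_1'\|=\|\gamma_2'\|=1$ a.e.\ on the arc-length range. Writing $a=\|\gamma_1\|$, $b=\|\gamma_2\|$, $u=\gamma_1'$, $v=\gamma_2'$ and $\delta=\delta_{\X}(\|u-v\|)$, I would split $[0,t_2]=[0,t_1]\cup[t_1,t_2]$ and check the inequality integrand-by-integrand. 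On $[0,t_1]$ the claim reduces to $\int_0^{t_1}(G-T)\,ds\ge0$, where $G=\tfrac12(\tfrac1a+\tfrac1b)-\tfrac{\|u+v\|}{\|\gamma_1+\gamma_2\|}$ and $T=\tfrac{\delta}{a+b}$; on $[t_1,t_2]$, where $\gamma_1\equiv\gamma_1(t_1)$ and only $\gamma_2$ moves, it reduces to $\tfrac{1}{\|\gamma_2\|}\ge\tfrac{1}{\|\gamma_1(t_1)+\gamma_2\|}$.

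The interval $[t_1,t_2]$ is easy and reveals the role of the extra left-hand term. Because $(\gamma_1(t_1)+\gamma_2)/2=\mu\in\B_{\|\cdot\|}(0,2)\setminus\B_{\|\cdot\|}(0,1)$, condition (i) gives $\|\gamma_1(t_1)+\gamma_2\|\ge2\ge\|\gamma_2\|$, which is exactly $\tfrac{1}{\|\gamma_2\|}\ge\tfrac{1}{\|\gamma_1(t_1)+\gamma_2\|}$. This crude bound only works because the term $\int_{t_1}^{t_2}\|d\gamma_2\|/(2d(\gamma_2))$ upgrades the weight of $1/\|\gamma_2\|$ from $\tfrac12$ to $1$; without it one would need the false inequality $\|\gamma_1(t_1)+\gamma_2\|\ge2\|\gamma_2\|$.

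The content is on $[0,t_1]$. Here the modulus of convexity gives $\|u+v\|\le2-2\delta$, and AM--GM gives $\tfrac{a+b}{2ab}\ge\tfrac{2}{a+b}$. Writing the deficit $D:=(a+b)-\|\gamma_1+\gamma_2\|\ge0$ and using $2\le a+b-D=2\|\mu\|\le a+b\le4$, these combine to the pointwise bound $G-T\ge \tfrac{\delta}{4}-\tfrac{D}{2}$. The two hypotheses now enter symmetrically. Uniform smoothness of power type $2$ ($\rho_{\X}(\tau)\le K\tau^2$) controls the \emph{loss}: with $m=\mu$, $h=(\gamma_1-\gamma_2)/2$ one gets $D=\|m+h\|+\|m-h\|-2\|m\|\le 2K\|h\|^2/\|m\|\le \tfrac{K}{2}\|\gamma_1-\gamma_2\|^2$. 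Since $\gamma_1(0)=\gamma_2(0)$, $\|\gamma_1(s)-\gamma_2(s)\|\le\int_0^s\|u-v\|\,d\tau$, so Cauchy--Schwarz yields
\[
\int_0^{t_1} D\,ds \le \frac{K}{2}\int_0^{t_1}\Big(\int_0^s\|u-v\|\,d\tau\Big)^2 ds \le \frac{K\,t_1^2}{4}\int_0^{t_1}\|u-v\|^2\,ds.
\]
Uniform convexity of power type $2$ ($\delta_{\X}(\epsilon)\ge c\,\epsilon^2$) controls the \emph{gain}: $\int_0^{t_1}\delta\,ds\ge c\int_0^{t_1}\|u-v\|^2\,ds$. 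Hence
\[
\int_0^{t_1}(G-T)\,ds \ge \frac14\int_0^{t_1}\delta\,ds-\frac12\int_0^{t_1}D\,ds \ge \Big(\frac{c}{4}-\frac{K\,t_1^2}{8}\Big)\int_0^{t_1}\|u-v\|^2\,ds,
\]
which is nonnegative once $t_1^2\le 2c/K$. Finally $t_1=\ell_{\|\cdot\|}(\gamma_1)\le 2\,\ell_k(\gamma_1)\le 2R$ because $\|\gamma_1\|\le2$, so choosing $R$ small depending only on $c$ and $K$ closes the argument.

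The main obstacle is precisely this local/accumulated mismatch. The gain $\delta=\delta_{\X}(\|u-v\|)$ is \emph{local} in the velocity difference, whereas the loss $D$ depends on the \emph{accumulated} separation $\|\gamma_1-\gamma_2\|=\|\int_0^s(u-v)\|$; a pointwise comparison $\delta\gtrsim D$ genuinely fails (e.g.\ when $u=v$ at a point after earlier separation). The remedy is to integrate and trade one factor of $t_1$ through Cauchy--Schwarz, so that smallness of the quasihyperbolic lengths (hence of $t_1$) forces the quadratic loss to sit below the quadratic gain. Getting the constants in the power-type-$2$ deficit estimate and the Hardy/Cauchy--Schwarz step to line up is the delicate part; everything else is bookkeeping.
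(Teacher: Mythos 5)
Your argument is correct, and I can verify each step: the identity $d(z)=\|z\|$ on $\X\setminus\{0\}$, the reduction on $[t_1,t_2]$ to $\|\gamma_1(t_1)+\gamma_2\|\ge 2\ge\|\gamma_2\|$ (which is exactly what the extra half-weighted term on the left is for), the pointwise bound $G-T\ge\delta/4-D/2$ using $2\le\|\gamma_1+\gamma_2\|\le\|\gamma_1\|+\|\gamma_2\|\le4$, and the integrated trade-off $\int_0^{t_1}D\le\tfrac{Kt_1^2}{4}\int_0^{t_1}\|u-v\|^2\le\tfrac{Kt_1^2}{4c}\int_0^{t_1}\delta$ with $t_1\le2\ell_k(\gamma_1)\le2R$. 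The survey itself states this theorem without proof (it is quoted from the authors' paper [RT]), so there is no in-text argument to compare against, but the structure of the statement --- the convexity modulus supplying the gain in the numerator, the smoothness modulus controlling the deficit $\|\gamma_1\|+\|\gamma_2\|-\|\gamma_1+\gamma_2\|$ in the denominator, and the power-type-$2$ hypotheses matching the quadratic loss against the quadratic gain via a factor of $t_1^2$ --- indicates that your route is precisely the intended one.
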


\section{Summary of critical radii}

In Table \ref{ksummary} and Table \ref{jsummary} the known radii of convexity, starlikeness and close-to-convexity for the quasihyperbolic and the $j$-metric balls are presented.

\begin{table}[ht]
  \begin{center}\begin{tabular}{c|c|c|c}
    domain & convex & starlike w.r.t $x$ & close-to-convex\\
    \hline
    $\PP$ & 1 \cite{k1} & $\kappa \approx 2.83$ \cite{k1} & $\lambda \approx 2.97$ \cite{k4}\\
    $\PS$ & 1 \cite{k1} & $\kappa \approx 2.83$ \cite{k1} & $\lambda^* \approx 2.97$ \cite{k4}\\
    convex, $\Rn$ & $\infty$ \cite{MartioVaisala11} & $\infty$ \cite{MartioVaisala11} & $\infty$ \cite{MartioVaisala11}\\
    convex, BS & $\infty$ \cite{RasilaTalponen} & $\infty$ \cite{RasilaTalponen} & $\infty$ \cite{RasilaTalponen}\\
    starlike w.r.t. $x$, $\Rn$ & ? & $\infty$ \cite{k1} & $\infty$ \cite{k1}\\
    general ($n = 2$), $\Rn$ & 1 \cite{v2} & $\pi/2^*$ \cite{v1} & $\pi/2^*$ \cite{v1}\\
    general ($n \ge 2$), $\Rn$ & ? & $\pi/2^*$ \cite{v1} & $\pi/2^*$ \cite{v1}\\
  \end{tabular}\end{center}
  \caption{\label{ksummary} The known radii of convexity, starlikeness and close-to-convexity for the quasihyperbolic balls $\Ball{k}{x}{r}$. Notation $r^*$ means that the radius $r$ is not sharp.}
\end{table}
\begin{table}[htr]
  \begin{center}\begin{tabular}{c|c|c|c}
    domain & convex & starlike w.r.t $x$ & close-to-convex\\
    \hline
    convex, $\Rn$ & $\infty$ \cite{k2} & $\infty$ \cite{k2} & $\infty$ \cite{k2}\\
    convex, BS & $\infty$ \cite{RasilaTalponen} & $\infty$ \cite{RasilaTalponen} & $\infty$ \cite{RasilaTalponen}\\
    starlike w.r.t. $x$, $\Rn$ & $\log 2$ \cite{k2} & $\infty$ \cite{k2} & $\infty$ \cite{k2}\\
    general ($n = 2$), $\Rn$ & $\log 2$ \cite{k2} & $\log (1+\sqrt{2})$ \cite{k2} & $\log (1+\sqrt{3})$ \cite{k4}\\
    general ($n \ge 2$), $\Rn$ & $\log 2$ \cite{k2} & $\log (1+\sqrt{2})$ \cite{k2} & $\log (1+\sqrt{3})^*$ \cite{k4}\\
  \end{tabular}\end{center}
  \caption{\label{jsummary} The known radii of convexity, starlikeness and close-to-convexity for the $j$-metric balls $\Ball{j}{x}{r}$. Notation $r^*$ means that the radius $r$ is not sharp.}
\end{table}

\comment{ 

\section{Open problems}

\begin{enumerate}
  \item Is it true that in the Uniqueness conjecture $c_u = \pi$ for $G \in \R^2$?
  \item ...
\end{enumerate}

} 


\end{document}